\newtheorem{theorem}{Theorem}[section] 
\newtheorem{proposition}[theorem]{Proposition}
\newtheorem{lemma}[theorem]{Lemma}
\theoremstyle{definition}
\newtheorem{remark}[theorem]{Remark}
\newenvironment{psmallmatrix}
  {\left(\begin{smallmatrix}}
  {\end{smallmatrix}\right)}
\newcommand{\TA}{\mathbb{A}}
\newcommand{\TB}{\mathbb{B}}
\newcommand{\TD}{\mathbb{D}}
\newcommand{\TyH}{\mathbb{H}}
\newcommand{\TI}{\mathbb{I}}
\newcommand{\kk}{\mathbf{k}}
\newcommand{\cam}{\operatorname{Cambrian}}
\newcommand{\nest}{\operatorname{NonNesting}}
\title{On the wildness of cambrian lattices}
\author{Fr\'ed\'eric Chapoton\footnote{Cet auteur a b\'en\'efici\'e d'une aide de l'Agence Nationale de la Recherche (projet Carma, r\'ef\'erence ANR-12-BS01-0017).}, Baptiste Rognerud\footnote{Cet auteur a b\'en\'efici\'e d'un financement de l'IDEX BMM/PN/AM/N\textsuperscript{o} 2016-096c.}}
\date{\today}
\begin{document}

\maketitle

%%%%%%%%%%%%%%

\begin{abstract}
  In this note, we investigate the representation type of the cambrian
  lattices and some other related lattices. The result is expressed as
  a very simple trichotomy. When the rank of the underlined Coxeter
  group is at most $2$, the lattices are of finite representation
  type. When the Coxeter group is a reducible group of type $\TA_1^3$,
  the lattices are of tame representation type. In all the other
  cases they are of wild representation type.
\end{abstract}

\section{Introduction}

By a famous result of Y. Drozd \cite{drozd}, associative algebras can be
classified according to their representation type, which can be
finite, tame or wild. Algebras of finite representation type have a
finite number of isomorphism classes of indecomposable modules. They
are quite rare in some sense, and a random algebra, meaning roughly any
sufficiently complicated algebra, should be expected \textit{a priori} to be
wild, unless there is a specific reason for the contrary. Algebras of
tame type are living on the frontier between finite type and wild
type, and should not be expected to appear frequently either.

Given any finite poset $P$ and a base field $\kk$, one can define the
incidence algebra of $P$ over $\kk$, which is a finite dimensional
associative algebra of finite global dimension. It is a natural
question to ask what is the representation type of incidence
algebras. A general description of the representation-finite case has
been made in \cite{loupias1, loupias2}. The tame case has been
considered in \cite{les1,les2}.

In this short note, an answer is given for some families of posets,
namely the cambrian lattices and some other related lattices. Cambrian
lattices have been introduced by N. Reading \cite{reading1, reading2}
to provide a description of the most combinatorial aspects of the
theory of cluster algebras of S. Fomin and A. Zelevinsky. More than
that, these posets allow to extend this combinatorial description from
finite root systems to finite Coxeter groups, therefore encompassing
the non-crystallographic finite Coxeter groups of type $\TI$ and $\TyH$.

Our main aim in this note is to tell which of the cambrian lattices have
finite representation type, and which have tame or wild representation type. It turns out that for this family of posets the trichotomy is particularly simple: when the rank of $W$ is $1$ or $2$, then the cambrian lattices are of finite representation type. Otherwise they are of wild representation unless they are isomorphic to the cube which is tame. This last case arises exactly when $W = \TA_1\times \TA_1\times \TA_1$. 

In the remaining part of the note, the same result is obtained for two
related families, namely lattices of order ideals of root posets and
Stokes lattices.

\section{Representation theory of posets}

The incidence algebra of a poset $P$ over a field $\kk$ has a basis
$I_{a,b}$ indexed by pairs of comparable elements $a \leq b$ in $P$
and its associative product is defined on this basis by
\begin{equation}
  I_{a,b} I_{c,d} = 
  \begin{cases}
    0 &\text{if}\quad b \not= c,\\
    I_{a,d} &\text{if}\quad b = c.
  \end{cases}
\end{equation}

Every poset can be seen as a small category, where there is a (unique)
map from $a$ to $b$ if and only if $a \leq b$ in $P$. The category of
finite-dimensional modules over the incidence algebra of $P$ over
$\kk$ is equivalent to the category of functors from the poset $P$ to
the category of finite-dimensional vector fields over $\kk$, denoted by $\mathcal{F}_{P,\kk}$.

This can also be restated, using the Hasse diagram of the poset $P$,
as the category of representations of a quiver with relations. The
arrows $a \to b$ of the quiver are the cover relations $a \triangleleft b$ in
the poset $P$, and the relations are commuting relations, saying that
any two paths sharing their ends are equal.

By a slight abuse of notation, we will say that two posets $P$ and $Q$
are derived equivalent when the derived categories of modules over
their incidence algebras are triangle-equivalent.

Similarly, we will say that $P$ has finite, tame or wild representation
type if the incidence algebra of $P$ has this property.

In the rest of this note we assume that $\kk$ is an algebraically closed field. We will systematically use the following well-known Lemma. Recall that a subposet of a poset $Y$ is the induced partial order on a subset of $Y$. 
\begin{lemma}
\label{sub_poset}
Let $X$ be a subposet of a finite poset $Y$. 
\begin{enumerate}
\item If $X$ is of wild representation type, then $Y$ is of wild representation type.
\item If $X$ is of tame representation type, then $Y$ is either of tame or of wild representation type.
\end{enumerate} 
\end{lemma}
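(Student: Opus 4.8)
The plan is to realize the incidence algebra of $X$ as an idempotent subalgebra of the incidence algebra of $Y$, and then to transport representation type through the two functors attached to such a subalgebra. Write $B$ for the incidence algebra of $Y$ and set $e = \sum_{x \in X} I_{x,x}$, an idempotent of $B$. First I would check that the idempotent truncation $eBe$ is isomorphic to the incidence algebra $A$ of $X$: its basis is $\{I_{a,b} : a,b \in X,\ a \le b \text{ in } Y\}$, and because $X$ is a \emph{full} subposet (the order on $X$ is the induced one), $a \le b$ in $Y$ is equivalent to $a \le b$ in $X$ for $a,b \in X$, and any factorization $I_{a,d} = I_{a,b}I_{b,d}$ occurring inside $eBe$ already takes place in $A$. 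Hence $A \cong eBe$, and the whole statement becomes the standard assertion that an idempotent subalgebra has representation type no more complicated than the ambient one.

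For part (2) I would use the Schur functor $S = e(-) \colon B\text{-mod} \to A\text{-mod}$, $M \mapsto eM$. It is additive and exact, since $eB$ is projective as a right $B$-module and $eM = eB \otimes_B M$. It is also dense: for an $A$-module $N$, the module $M = Be \otimes_A N$ satisfies $eM = eBe \otimes_A N = N$. Now if $Y$ were of finite representation type, then every indecomposable $A$-module $N \cong S(M)$ would be a direct summand of $S(M_1)\oplus\cdots\oplus S(M_r)$, where $M_1,\dots,M_r$ run over the finitely many indecomposable $B$-modules; there would then be only finitely many indecomposable $A$-modules, so $X$ would be of finite type. Contrapositively, if $X$ is tame (hence infinite) then $Y$ is infinite, i.e. tame or wild, which is exactly part (2).

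For part (1) I would instead use the functor $\Phi = Be \otimes_A - \colon A\text{-mod}\to B\text{-mod}$, which is left adjoint to $S$. Since the unit $N \to S\Phi N = eBe\otimes_A N = N$ is the identity, $\Phi$ is fully faithful; being additive and fully faithful, it induces isomorphisms on endomorphism rings, hence preserves indecomposability and reflects isomorphisms. Feeding a wild family for $A$ through $\Phi$ — concretely, tensoring on the left by $Be$ the $A$-$\kk\langle x,y\rangle$-bimodule witnessing wildness of $A$ — should produce a corresponding family for $B$, so that $Y$ is wild whenever $X$ is. The main obstacle is precisely here: one must verify that $\Phi$, or the associated bimodule, is a genuine representation embedding in the tame–wild sense, namely that $Be \otimes_A P$ remains free of finite rank over $\kk\langle x,y\rangle$ and that indecomposability and isomorphism classes are preserved along the parameter. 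This is the one step requiring care, since $\Phi$ is only right exact in general; it can be settled either by the standard criterion for representation embeddings defined by bimodules, or by directly invoking the known results on the representation type of idempotent truncations $eBe$. Everything else is formal.
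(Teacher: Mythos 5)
Your proof is essentially the paper's argument in algebraic rather than categorical dress: the identification $A\cong eBe$ is exactly the statement that the inclusion $X\hookrightarrow Y$ is fully faithful, and your functor $\Phi=Be\otimes_A-$ is precisely the left Kan extension $i_!$ used in the paper, with your observation that the unit $N\to eBe\otimes_A N\cong N$ is an isomorphism playing the role of the citation of Borceux's Proposition $3.7.3$. The only divergences are minor: for part (2) you argue contrapositively through the exact Schur functor $e(-)$ instead of pushing a one-parameter family of indecomposables forward through $i_!$, and you flag explicitly (more so than the paper does) the standard caveat that transporting wildness requires the induced bimodule $Be\otimes_A P$ to remain an admissible representation embedding over $\kk\langle x,y\rangle$.
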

\begin{proof}
We view $X$ and $Y$ as finite categories. Then, the embedding $i$ of $X$ in $Y$ is a fully-faithful functor. The precomposition by this functor induces a functor $i^{-1}$ from $\mathcal{F}_{Y,k}$ to $\mathcal{F}_{X,\kk}$. By usual arguments, it has a left adjoint $i_{!}$ given by the left Kan extension along $i$. If $F\in \mathcal{F}_{X,\kk}$, then by Proposition $3.7.3$ of \cite{borceux}, we have $(i^{-1} \circ i_{!})(F) \cong F$. It follows that the functor $i_{!}$ sends an indecomposable object of $\mathcal{F}_{X,\kk}$ to an indecomposable object of $\mathcal{F}_{Y,\kk}$. Moreover, for $F_1$ and $F_2$ in $\mathcal{F}_{X,\kk}$, we have $i_{!}(F_1)\cong i_{!}(F_2)$ if and only if $F_1\cong F_2$. 
\end{proof}
%We will need an auxiliary lemma.
%\begin{lemma}
%  \label{lemma_square}
%  Let $P$ be a poset whose Hasse diagram contains a commutative square
%  $ a > b, c > d$, where $a$ has only two neighbors $b,c$ in the Hasse
%  diagram. Let $Q$ be the poset obtained by replacing locally in the
%  Hasse diagram this commutative square by $b, c > a > d$.
%  Then $P$ is derived equivalent to $Q$.
%\end{lemma}
%
%\begin{proof}
%  TODO
%\end{proof}
We will also need the following well-known result.
\begin{proposition}\label{contraction}
Let $X$ and $Y$ be two finite posets and $f : X\to Y$ be a surjective morphism of posets such that $f^{-1}(y)$ is connected for every $y\in Y$. Then, if $Y$ is wild representation type, so is $X$.
\end{proposition}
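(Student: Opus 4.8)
The plan is to transport back to $X$, along $f$, the wild family of modules that witnesses the wildness of $Y$, in exactly the spirit of the proof of Lemma~\ref{sub_poset}. Viewing $f$ as a functor between the finite categories $X$ and $Y$, I would consider the pullback functor $f^{*}\colon \mathcal{F}_{Y,\kk}\to\mathcal{F}_{X,\kk}$, $G\mapsto G\circ f$. It is $\kk$-linear and, being defined objectwise, exact; so, just as in Lemma~\ref{sub_poset}, it suffices to show that $f^{*}$ is \emph{fully faithful}. Indeed, a fully faithful additive functor induces isomorphisms on endomorphism rings, hence sends objects with local endomorphism ring to objects with local endomorphism ring (preserving indecomposability) and reflects isomorphism classes. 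Granting this, if $\Phi$ is the representation embedding (from the finite-dimensional modules over a wild algebra, e.g. $\kk\langle x,y\rangle$) witnessing the wildness of $Y$, then $f^{*}\circ\Phi$ is again exact, indecomposability-preserving and isomorphism-reflecting, hence witnesses the wildness of $X$.

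The core of the argument, and the step I expect to require care, is the full faithfulness of $f^{*}$. Faithfulness is immediate. For fullness I would start from a morphism $\eta\colon f^{*}G_{1}\to f^{*}G_{2}$, that is, maps $\eta_{x}\colon G_{1}(f(x))\to G_{2}(f(x))$ natural in $x$, and first show that $\eta$ is constant on each fibre: if $x\triangleleft x'$ with $f(x)=f(x')=y$, then the vertical maps of the naturality square are the identities $G_{i}(y\le y)$, which forces $\eta_{x}=\eta_{x'}$; connectedness of $f^{-1}(y)$ then propagates this equality across the whole fibre, yielding a well-defined $\bar\eta_{y}\colon G_{1}(y)\to G_{2}(y)$, defined for every $y$ by surjectivity. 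The delicate point is to verify that $\bar\eta=(\bar\eta_{y})_{y}$ is natural on $Y$: for a cover relation $y\triangleleft y'$ I would lift it to a comparable pair $x\le x'$ in $X$ with $f(x)=y$ and $f(x')=y'$, after which the naturality square of $\eta$ for $(x,x')$ is literally the naturality square of $\bar\eta$ for $(y,y')$. The assignment $\eta\mapsto\bar\eta$ is then a two-sided inverse to $f^{*}$ on morphism spaces, establishing fullness.

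The hard part is exactly this descent, and it is where the hypotheses must be fully exploited. Connectedness of the fibres is what forces $\eta$ to be constant fibrewise — without it $f^{*}$ need not even preserve indecomposables. The lifting of cover relations, on the other hand, is the genuine content of $f$ being a \emph{contraction}: one uses that $Y$ carries the order induced by $X$, so that every cover relation downstairs comes from a comparable pair upstairs. This is indispensable, since mere surjectivity with connected fibres does not guarantee it (an antichain surjects onto any $Y$ with connected singleton fibres while destroying all relations), which is precisely why the contraction hypothesis is needed. Once $f^{*}$ is known to be fully faithful, the conclusion follows formally from the representation-embedding argument of the first paragraph.
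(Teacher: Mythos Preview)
Your approach is exactly the paper's: the paper's proof simply cites Loupias and asserts that the pullback $f^{-1}\colon\mathcal{F}_{Y,\kk}\to\mathcal{F}_{X,\kk}$ is a fully faithful embedding, which is precisely the claim you unpack and prove in detail. Your observation that fullness requires lifting relations from $Y$ to $X$---the genuine \emph{contraction} hypothesis, as the proposition's label suggests and as your antichain example shows is not automatic---is well taken; the paper leaves this implicit in the reference to Loupias.
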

\begin{proof}
This is Proposition $1.3$ of \cite{loupias2}. This follows from the fact that the functor $f^{-1} : \mathcal{F}_{Y,\kk} \to \mathcal{F}_{X,\kk}$ is a fully-faithful embedding. 
\end{proof}
Derived equivalences provide powerful tools for the study of finite dimensional algebras and finite posets. When two algebras share the same derived categories, they share a number of invariants such as the number of isomorphism classes of simple modules, or the center. However, the representation type is not preserved by derived equivalences. Still, when a finite dimensional algebra is derived equivalent to a hereditary algebra, its representation type is dominated\footnote{Here dominated means smaller in the order where (finite) $\leq$ (tame) $\leq$ (wild).} by the representation type of the hereditary algebra. 
\begin{proposition}
  \label{domine}
  Let $A$ be a finite dimensional $\kk$-algebra. Let $\mathcal{H}$ be an abelian hereditary category. If $D^b(A) \cong D^b(\mathcal{H})$, then the representation type of $A$ is dominated by the representation type of $\mathcal{H}$.  
\end{proposition}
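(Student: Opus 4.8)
The plan is to exploit the special structure of the bounded derived category of a hereditary abelian category: all its indecomposable objects are shifts of indecomposable objects of the heart. First I would record this structural fact. Since $\mathcal{H}$ is hereditary, every object $C\in D^b(\mathcal{H})$ splits as $\bigoplus_n H^n(C)[-n]$, so the indecomposable objects of $D^b(\mathcal{H})$ are precisely the objects $X[n]$ with $X\in\mathcal{H}$ indecomposable and $n\in\ZZ$. Fixing a triangle equivalence $\Phi\colon D^b(A)\xrightarrow{\sim}D^b(\mathcal{H})$, each indecomposable $A$-module $M$, seen as a stalk complex in degree $0$, is sent to an indecomposable object of $D^b(\mathcal{H})$, hence $\Phi(M)\cong X_M[n_M]$ for a well-defined indecomposable $X_M\in\mathcal{H}$ and integer $n_M$. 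The assignment $M\mapsto(X_M,n_M)$ is injective, since $\Phi$ is an equivalence and reflects isomorphisms between indecomposables.

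Next I would bound the shift $n_M$. Let $S_1,\dots,S_r$ be the simple $A$-modules and choose an interval $[a,b]$ containing all cohomological degrees of the finitely many objects $\Phi(S_1),\dots,\Phi(S_r)$. Any $A$-module $M$ admits a finite filtration with simple subquotients, so $\Phi(M)$ is obtained from the $\Phi(S_i)$ by iterated triangles; using the associated long exact cohomology sequences, the cohomology of $\Phi(M)$ is concentrated in $[a,b]$. As $\Phi(M)\cong X_M[n_M]$ lives in the single degree $-n_M$, this forces $-n_M\in[a,b]$. Consequently $n_M$ ranges over a finite set and the fibres of $M\mapsto X_M$ have size at most $b-a+1$. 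Passing to Grothendieck groups, $\Phi$ induces an isometry $K_0(A)\xrightarrow{\sim}K_0(\mathcal{H})$ carrying $\underline{\dim}\,M$ to $(-1)^{n_M}[X_M]$; since $n_M$ is bounded, the $\kk$-dimension of $M$ is controlled by the class $[X_M]$, so families of bounded size correspond on the two sides.

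With these facts the trichotomy follows. If $\mathcal{H}$ is representation finite it has finitely many indecomposables, so by the bounded, finite-to-one correspondence $A$ has finitely many indecomposable modules and is of finite type. If $\mathcal{H}$ is tame, then for each dimension all but finitely many indecomposables lie in finitely many one-parameter families; pulling these back through $\Phi$ and sorting by the bounded shift yields, in each dimension, finitely many one-parameter families of $A$-modules together with finitely many exceptions, so $A$ is tame or finite. When $\mathcal{H}$ is wild there is nothing to prove, wild being the top of the order. In every case the representation type of $A$ is at most that of $\mathcal{H}$, which is the asserted domination.

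I expect the main obstacle to be the passage from this objectwise bijection to a genuine comparison of representation types, since tameness and wildness are defined through algebraic families of modules (representation embeddings of $\kk[x]$- or $\kk\langle x,y\rangle$-modules) rather than through individual indecomposables. To make the family-level statement precise one should use that $\Phi$ is standard, that is, given by $-\otimes^{\mathbb{L}}T$ for a two-sided tilting complex, so that it carries a flat family of objects of $\mathcal{H}$ to a family of complexes whose cohomology, by the degree bound above, is again a flat family of $A$-modules of bounded dimension. Checking that the induced families realise the parametrising functors demanded by the definitions of tame and wild type is the technical heart of the argument.
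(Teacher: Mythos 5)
Your proposal follows essentially the same route as the paper's proof: view indecomposable $A$-modules as stalk complexes, observe that the equivalence sends them to indecomposables of $D^b(\mathcal{H})$ whose cohomology is confined to a fixed finite range of degrees, and use heredity of $\mathcal{H}$ to conclude these images are shifts of indecomposables of $\mathcal{H}$ lying in finitely many copies of $\mathcal{H}$. You supply more detail than the paper (which defers the degree bound and the family-level comparison of representation types to Section 2 of Happel--Zacharia), and your closing remarks correctly identify where the remaining technical work lives.
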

\begin{proof}
Let $F : D^{b}(A) \to D^{b}(\mathcal{H})$ be an equivalence of triangulated categories. As usual, we view the indecomposable $A$-modules as complexes concentrated in degree zero. Then, the functor $F$ sends an indecomposable $A$-module to an indecomposable object of $D^{b}(\mathcal{H})$. Moreover, there exist two integers $m$ and $n$, independent of $X$ (but which depend on the choice of the equivalence $F$) such that the homology of $F(X)$ is concentrated in degrees in $[m,n]$. Since the category $\mathcal{H}$ is hereditary, the indecomposable objects of $D^b(\mathcal{H})$ are just shifts of indecomposable objects of $\mathcal{H}$. So the images of the indecomposable $A$-modules under $F$ lie in a finite number of copies of $\mathcal{H}$. For more details, see for example Section $2$ of \cite{happel_zacharia}. 
\end{proof}

\begin{proposition}[Ladkani]\label{flip_flop}
Let $X$ be a finite poset with a unique maximal element $\hat{1}$. Let $Y = X\backslash \{\hat{1}\}\sqcup \{\hat{0}\}$ where $\hat{0}$ is the unique minimal element of $Y$. Then, the posets $X$ and $Y$ are derived equivalent. 
\end{proposition}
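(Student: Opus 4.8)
The plan is to use Rickard's derived Morita theory: two finite-dimensional algebras are derived equivalent exactly when one is the endomorphism algebra of a tilting complex over the other. Hence it is enough to produce a tilting complex $T$ in $K^b(\mathrm{proj}\,\kk X)$ together with an isomorphism $\operatorname{End}_{D^b(\kk X)}(T)\cong\kk Y$, built so as to move the maximum $\hat 1$ of $X$ to the minimum $\hat 0$ of $Y$ while preserving the induced order on $X':=X\setminus\{\hat 1\}$.

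First I would record the local picture at $\hat 1$. Since $\hat 1$ is the greatest element, the projective $P_{\hat 1}$ is the simple module $S_{\hat 1}$ and the injective $I_{\hat 1}$ is the constant functor $\underline\kk_X$; equivalently, writing $C=\kk X'$, the algebra $\kk X$ is the one-point coextension of $C$ by the constant module $U=\underline\kk_{X'}$ (the new vertex $\hat 1$ being a sink), while $\kk Y$ is the one-point extension of $C$ by the same $U$ (the new vertex $\hat 0$ being a source). In this language the statement is that the coextension and the extension of $C$ by the constant module are derived equivalent. The natural building block for $T$ is the reflection at the simple projective summand $S_{\hat 1}$: one replaces the summand $P_{\hat 1}$ of the regular module by $\operatorname{cone}(P_{\hat 1}\to E)$, where $P_{\hat 1}\to E$ is a minimal left $\operatorname{add}\big(\bigoplus_{x\neq\hat 1}P_x\big)$-approximation, so that $E$ is a sum of projectives at the coatoms of $X$, and keeps the remaining summands. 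Such a complex is automatically tilting (it generates, and silting-mutation theory gives the self-orthogonality), so it does yield a derived equivalence.

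The crux, and the step I expect to be the real obstacle, is to identify the endomorphism algebra with $\kk Y$ itself, and not merely with some algebra in the same derived equivalence class. A single reflection is not enough: it only slides $\hat 1$ down below the coatoms, so on a chain it already returns the wrong orientation, and to land exactly on the coextension one needs the globally chosen two-term complex of Ladkani rather than one local mutation. Concretely, the verification amounts to computing $\operatorname{Hom}_{D^b(\kk X)}(T_a,T_b[n])$ for every pair of indecomposable summands and every $n$, and showing that these groups vanish for $n\neq 0$ and reproduce the incidence relations of $Y$ for $n=0$. Because all summands other than the new one are projective, these computations reduce to reading off the cohomology of the complex at the points of $X$ and to Ext-groups controlled by the topology of the intervals $[x,\hat 1]$; as $\hat 1$ is a cone point these intervals are contractible, and this contractibility is exactly what kills the higher Ext-groups and forces the new summand to lie below every other summand, just as $\hat 0$ does in $Y$. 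Collecting these identifications gives $\operatorname{End}_{D^b(\kk X)}(T)\cong\kk Y$ and finishes the argument.
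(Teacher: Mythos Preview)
Your proposal has a genuine gap at the crucial step. You correctly set up the problem via Rickard's theorem and correctly observe that the single APR-style reflection at $\hat 1$ --- replacing $P_{\hat 1}$ by $\operatorname{cone}(P_{\hat 1}\to E)$ with $E$ supported on the coatoms and keeping all other $P_x$ as they are --- does \emph{not} produce $\kk Y$. But having diagnosed this, you do not supply a replacement: you invoke ``the globally chosen two-term complex of Ladkani'' without ever writing it down, and then your verification paragraph slips back to the reflection complex (``all summands other than the new one are projective''), which you have just said is the wrong object. So the argument, as written, verifies the tilting property and computes endomorphisms for a complex whose endomorphism ring you already know is not $\kk Y$.

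The missing idea is simple but essential, and it is exactly what the paper records: for \emph{every} $x\neq\hat 1$ (not just the coatoms) set $T_x$ equal to the two-term complex $P_{\hat 1}\xrightarrow{f}P_x$ with $P_{\hat 1}$ in degree~$0$, and take $T=P_{\hat 1}\oplus\bigoplus_{x\neq\hat 1}T_x$. Here no summand for $x\neq\hat 1$ is projective; each is a length-two complex, and the obvious chain maps $T_x\to T_y$ for $x\leq y$ in $X'$ together with the inclusions $P_{\hat 1}\hookrightarrow T_x$ give precisely the incidence relations of $Y$, with the stalk $P_{\hat 1}$ playing the role of the new minimum $\hat 0$. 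Once this complex is written down, the Hom computations are elementary (and do not require any contractibility argument about intervals $[x,\hat 1]$). Your reflection construction moves $\hat 1$ only one level down; Ladkani's complex moves it globally to the bottom in one step by threading $P_{\hat 1}$ through every other projective simultaneously.
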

\begin{proof}
This is a special case of the much more general \emph{flip-flop} of Ladkani (see Corollary $1.3$ of \cite{ladkani_universal}). In this simple case, the derived equivalence can be realized by the following tilting complex. If $x\in X$, we denote by $P_x$ the projective indecomposable module corresponding to the element $x$. If $x\neq \hat{1}$, we consider the complex $T_{x} = P_{\hat{1}} \overset{f}{\to} P_{x}$, where $f$ is the canonical embedding of $P_{\hat{1}}$ into $P_{x}$ and $P_{\hat{1}}$ is in degree zero of the complex. Then, $T = P_{\hat{1}} \bigoplus_{\hat{1}\neq x\in X} T_{x}$ is a tilting complex and its endomorphism algebra is the incidence algebra of $Y$. 
\end{proof}

\begin{proposition}\label{wild_subposet}
  Let $\kk$ be an algebraically closed field. Let $n\geqslant 3$. Let
  $Y$ be a finite poset whose Hasse diagram is a non circular
  orientation of an affine diagram of type $\TA_n^{(1)}$. Let
  $X = Y\sqcup \{\omega\}$ be a poset whose Hasse diagram contains
  exactly two more edges, in such a way that $\omega$ is the maximal
  element or the minimal element of a commutative square. Then, $X$ is
  of wild representation type.
\end{proposition}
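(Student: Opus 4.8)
The plan is to reduce the statement, by a duality and a sequence of edge contractions, to a single small base poset, and then to certify directly that this base poset is wild. Since the incidence algebra of $X^{\mathrm{op}}$ is the opposite algebra of that of $X$, the two posets have the same representation type, so I may assume that $\omega$ is the \emph{maximal} element of the commutative square. Writing $a,b,c$ for the three remaining vertices of the square, the two new edges are $b\lessdot\omega$ and $c\lessdot\omega$, while $a\lessdot b$ and $a\lessdot c$ are edges of $Y$; hence $a$ is a source of the oriented cycle $Y$ and $b,c$ are its two neighbours. The Hasse diagram of $X$ is then a theta graph: $b$ and $c$ are joined by three internally disjoint chains, namely $b\,a\,c$, $b\,\omega\,c$, and the long arc of the cycle, of length $n-1$. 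Because the square is commutative, $b$ and $c$ are incomparable, so this long arc is not monotone and therefore contains at least one internal turning vertex (a local source or sink). Observe that the derived-equivalence tools of Propositions~\ref{domine} and~\ref{flip_flop} only bound the representation type from above, so wildness has to be produced through Lemma~\ref{sub_poset} and Proposition~\ref{contraction}, together with a genuinely wild base case.

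Next I would run the induction on $n$ using Proposition~\ref{contraction}. For $n\ge 4$ the long arc has length at least $3$, so it contains a covering pair $u\lessdot u'$ whose contraction leaves a turning vertex of the arc intact. Identifying $u$ and $u'$ to a single element defines a surjective morphism of posets $f\colon X\to X'$ whose only non-trivial fibre is the connected set $\{u,u'\}$; the quotient $X'$ is again a poset of the form described in the statement, with $n$ replaced by $n-1$ (the cycle is shortened by one, the square at $a$ is untouched, and the arc keeps a turn, so $b,c$ remain incomparable). By Proposition~\ref{contraction}, if $X'$ is wild then so is $X$. Iterating, I reduce the whole statement to the case $n=3$.

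It remains to treat $n=3$. Here, up to the duality above, $X$ is the five-element poset $B$ with least element $a$, two incomparable elements $b,c$ above it, and two incomparable maximal elements, each above both $b$ and $c$. Deleting $a$ leaves the subposet $\{b,c\}<\{\text{two maxima}\}$, whose Hasse diagram is the affine cycle $\TA_3^{(1)}$ with its bipartite orientation; this is tame hereditary, so Lemma~\ref{sub_poset} already shows that $B$ is of tame or of wild type. The crux — and the step I expect to be the main obstacle — is to rule out tameness and show that $B$ is genuinely wild. Since $B$ has only five vertices, every proper subposet and every proper contraction of $B$ is representation-finite or tame, so neither Lemma~\ref{sub_poset} nor Proposition~\ref{contraction} can upgrade the tame subposet to a wild conclusion; the wildness of $B$ must be certified intrinsically, and the usual numerical heuristics are delicate here because $B$ has global dimension $2$ (its Tits form is only weakly non-negative, so it does not detect the wildness).

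I would certify the base case in one of two ways. Either I invoke the classifications of representation-finite \cite{loupias1,loupias2} and of tame \cite{les1,les2} incidence algebras: $B$ appears in neither list, so the trichotomy forces it to be wild. Or I exhibit wildness by hand: the source $a$ couples, through the two commutative squares, the Kronecker-type data at $b$ and at $c$ into a single matrix problem, and I would produce from it a two-parameter family of pairwise non-isomorphic indecomposable representations supported on $B$ (equivalently, a representation embedding of the category of modules over the free algebra on two generators). Granting that $B$ is wild, the contraction argument of the second paragraph propagates wildness to every $X$ with $n\ge 3$, which proves the proposition.
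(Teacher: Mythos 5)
Your architecture is the reverse of the paper's. The paper does not induct downward on $n$: it exhibits, directly on the given $X$, a two-parameter family of pairwise non-isomorphic bricks $M(\lambda,\mu)$ (a copy of $\kk^2$ at each vertex of the cycle, the identity on every arrow except one arrow $\alpha$ outside the square, which carries $\operatorname{diag}(\lambda,\mu)$, and $\kk$ at $\omega$ embedded diagonally). That construction exploits the global shape of the cycle — it needs the deformed arrow $\alpha$ to be unconstrained by any commutativity relation — and this is exactly the feature your edge contractions destroy.

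The fatal gap is your base case: the five-element poset $B$ (least element $a$, then $b\parallel c$, then two maxima $p\parallel q$ above both) is \emph{tame}, not wild, so Proposition \ref{contraction} — which only propagates wildness from the quotient up to $X$ — yields nothing and the induction collapses. To see that $B$ is tame, note that a representation of $B$ is a representation $M$ of the crown $\{b,c\}<\{p,q\}$ (tame hereditary of type $\TA_3^{(1)}$, with two exceptional tubes of rank $2$) together with a linear map $V_a\to\operatorname{Hom}(R,M)$, where $R$ is the crown representation with all spaces $\kk$ and all four maps the identity. This $R$ is a simple regular module lying in a \emph{homogeneous} tube, so the incidence algebra of $B$ is a one-point (tubular) extension of a tame concealed algebra of tubular type $(2,2)$ with extension type $(2,2,2)$, which is domestic by Ringel's classification of tubular extensions. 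Concretely, the Euler form of $B$ is positive semidefinite with one-dimensional radical spanned by $(0,1,1,1,1)$: you noticed this yourself and dismissed it as the Tits form failing to detect wildness, but here it is telling the truth — there is no two-parameter family of indecomposables, so both of your proposed certificates (absence from the tame lists; an explicit two-parameter family) must fail. As a by-product you have actually located a weak point of the statement itself: $B$ satisfies every hypothesis of Proposition \ref{wild_subposet} with $n=3$ and the bipartite orientation of the $4$-cycle, yet for this configuration the paper's family $M(\lambda,\mu)$ is not even well defined, since the second commutative square at $\omega$ forces $\operatorname{diag}(\lambda,\mu)\bigtriangleup=\bigtriangleup$; the proposition is only ever applied to cycles with several sources and sinks, where no relation constrains $\alpha$. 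To repair your proof you must abandon the reduction to small $n$ and argue on $X$ itself, as the paper does (and even then one must check that $\alpha$ can be chosen free of relations).
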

\begin{proof}
Let us remark that it is always possible to produce such a poset $X$. The orientation of the diagram $\TA_n^{(1)}$ is non circular, so there are subposets of $\TA_n^{(1)}$ of the form $\overset{a}{\bullet} \rightarrow \bullet \leftarrow \overset{b}{\bullet}$ and of the form $\overset{c}{\bullet}  \leftarrow \bullet \rightarrow \overset{d}{\bullet} $. If we set $\omega>a$ and $\omega > b$ we have a poset $X$ such that $\omega$ is the maximal element of a commutative square. If we set $\omega<c$ and $\omega <d$, then $\omega$ is the minimal element of a commutative square. By duality, it is enough to prove the result when $\omega$ is the minimal element of a commutative square. We exhibit a two-parameter family of pairwise non-isomorphic indecomposable representations for the poset $X$ inspired by a similar construction for the preprojective algebra of type $A_6$ (see \cite{mo}). This implies that $X$ is of wild representation type. 
\newline Let $i : \TA_n^{(1)} \to X$ be the canonical embedding. By precomposition, we have a functor $i^{-1} : \mathcal{F}_{X,\kk} \to \mathcal{F}_{\TA_n^{(1)},\kk}$ which is nothing but the obvious restriction to the subposet $\TA_n^{(1)}$. 
\newline We fix $\alpha$ an arrow of the Hasse diagram of $X$ which is not in the commutative square. Let $\lambda \in \kk$. For $x\in \TA_n^{(1)}$, we set $M(\lambda)(x) = \kk$. We let $M(\lambda)(\alpha) : \kk\to \kk$ be the multiplication by $\lambda$. And for every arrow $\alpha'$ in the Hasse diagram of $\TA_n^{(1)}$, we let $M(\lambda)(\alpha')=\operatorname{Id}_{\kk}$. Extending $M(\lambda)$ to any morphism of the finite category $\TA_n^{(1)}$, we have a functor $M(\lambda)$ in $\mathcal{F}_{\TA_n^{(1)},\kk}$. For $\lambda, \mu \in \kk$, it is easy to see that
\[ \operatorname{Hom}_{\mathcal{F}_{\TA_n^{(1)},\kk}}\big(M(\lambda),M(\mu)\big)\cong \left\{
      \begin{aligned}
        k & \hbox{ if }  \lambda = \mu, \\
        0 & \hbox{ otherwise.}
      \end{aligned}
    \right. \]
Here, the isomorphism $\kk \cong \operatorname{End}(M(\lambda))$ is given by the application that sends $t\in \kk$ to the natural transformation $(t_x)_{x\in \TA_n^{(1)}}\in \operatorname{End}(M(\lambda))$, where $t_{x} : M(\lambda)(x) =\kk \to M(\lambda)(x) = \kk$ is the multiplication by $t$. Since the functor $M(\lambda)$ has a local endomorphism algebra, it is indecomposable. 
\newline\indent Let $\lambda,\mu \in \kk$ such that $\lambda \neq \mu$. Then, we consider the functor $M(\lambda,\mu)$ in $\mathcal{F}_{X,\kk}$ defined as follows: 
\[ i^{-1}\big(M(\lambda,\mu)\big)=M(\lambda)\oplus M(\mu), \hbox{ and } M(\lambda,\mu)(\omega) = \kk. \]
For the two arrows $g_1$ and $g_2$ starting at $\omega$, we let $M(\lambda,\mu)(g_i)$ be the diagonal embedding $\bigtriangleup$ of $\kk$ into $\kk\oplus \kk$. In other terms, the representation $M(\lambda,\mu)$ has the following shape:
\[
\xymatrix @!0 @R=11mm @C=1,7cm {
&&&&&& \kk\ar[rd]^{\bigtriangleup}\ar[ld]_{\bigtriangleup} \\
\kk\oplus\kk \ar@{-}@/_2pc/[ddrrrrr]\ar@{-}[r]& \cdots \ar@{-}[r]& \kk\oplus\kk\ar@{-}[r]^{\begin{psmallmatrix}\lambda & 0\\0 & \mu \end{psmallmatrix}}_{\alpha} & \kk\oplus\kk\ar@{-}[r] & \cdots\ar@{-}[r] & \kk\oplus\kk\ar[rd] & & \kk\oplus\kk\ar[ld]\ar@{-}[r] & \cdots \ar@{-}[r]& \kk\oplus\kk \ar@{-}@/^2pc/[ddllll] \\ 
&&&&&& \kk\oplus\kk \\
&&&&& \kk\oplus\kk
}
\] 
where all the non-labelled edges correspond to the identity matrix. 
\newline Let $f$ be an endomorphism of $M(\lambda,\mu)$. Then, $i^{-1}(f)$ is an endomorphism of $M(\lambda)\oplus M(\mu)$. In particular, because one assumes that $\lambda \not= \mu$, it is of the form $f_{1}\oplus f_{2}$ where $f_1$ is an endomorphism of $M(\lambda)$ and $f_2$ is an endomorphism of $M(\mu)$. Moreover, we know that $f_1$ (resp. $f_2$) is the natural transformation given by the multiplication by an element $t_1 \in \kk$ (resp. $t_2 \in \kk$). It follows that for every $x\in \TA_n^{(1)}$, we have $f_{x} =\begin{psmallmatrix}t_1 & 0\\0 & t_2\end{psmallmatrix}$. If we look at the situation at the vertex $\omega$, we have the following commutative square:
\[ 
\xymatrix{
\kk \ar[r]^{f_{\omega}} \ar[d]^{\bigtriangleup} & \kk\ar[d]^{\bigtriangleup} \\
\kk\oplus\kk \ar[r]^{f_1\oplus f_2} &\kk\oplus\kk.
}
\]
So, we have $t_1 = t_2 = f_{\omega}(1)$. It is now clear that $\operatorname{End}\big(M(\lambda,\mu)\big)\cong \kk$. In particular, the functor $M(\lambda,\mu)$ is indecomposable. Let $\lambda'\neq\mu' \in \kk$. If there is an isomorphism $f : M(\lambda,\mu) \to M(\lambda',\mu')$, then by restriction, there is an isomorphism from $M(\lambda)\oplus M(\mu)$ to $M(\lambda')\oplus M(\mu')$. This implies that $\{\lambda,\mu\} = \{\lambda',\mu'\}$. Since the roles of $\lambda$ and $\mu$ are symmetric, there is an isomorphism $M(\lambda,\mu)\cong M(\mu,\lambda)$. In other words, there is an isomorphism between the indecomposable representations $M(\lambda,\mu)$ and $M(\lambda',\mu')$ if and only if $\{\lambda,\mu\} = \{\lambda',\mu'\}$. 
\end{proof}
% Spectral radius of the Coxeter transformation : if finite or tame,
% then radius = 1. REFERENCE ?

% Utiliser pas spectral radius mais plutot (quasi-)periodicite de la transfo de Coxeter ?

% Ladkani : https://arxiv.org/pdf/math/0611201.pdf --> renvoie a

% [20] Masahisa Sato, Periodic Coxeter matrices and their associated quadratic forms

% \begin{remark}
%   There can be wild algebras with spectral radius $1$, for
%   example the incidence algebra of the cube, see below.
% \end{remark}

\section{Cambrian lattices}

Let $W$ be a finite Coxeter group. Let $n$ denote the rank of $W$,
which is the number of simple generators $\{s_1,\dots,s_n\}$.

When the Coxeter graph of $W$ is connected, so that $W$ is not a
product of smaller Coxeter groups, we will say that $W$ is
irreducible, and reducible otherwise.

Recall that a Coxeter element $c$ in $W$ is the product of the simple
generators in some order.

Once a Coxeter element $c$ is chosen, one can define the cambrian
lattice $\cam(W,c)$ using restriction of the weak order on $W$ to the
set of $c$-sortable elements. The reader is referred to the original
articles \cite{reading1, reading2} for the precise definition.

For a given $W$, the cambrian lattices for various choices of $c$
share the same number of elements, which is called the Coxeter-Catalan
number of type $W$. Much more is true, namely their Hasse diagrams are
orientations of the same unoriented $n$-regular graph, which is the
exchange graph of cluster theory when $W$ is crystallographic.

Something deeper also holds, at least in the case of simply-laced
crystallographic $W$. S. Ladkani has proved in \cite{ladkani_cluster},
in the much more general case of quivers, the following statement.

\begin{proposition}
  Let $W$ be a simply-laced crystallographic root-system. Let $c$ and
  $c'$ be two Coxeter elements for $W$. Then the incidence algebras of
  $\cam(W,c)$ and $\cam(W,c')$ are derived equivalent.
\end{proposition}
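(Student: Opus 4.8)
The plan is to reduce the statement for an arbitrary pair $(c,c')$ to a sequence of \emph{elementary} moves, each giving a single derived equivalence, and then to conclude by transitivity of derived equivalence. For simply-laced crystallographic $W$, the Coxeter elements are in bijection with the acyclic orientations of the Coxeter graph of $W$: a reduced word $c = s_{\sigma(1)}\cdots s_{\sigma(n)}$ orients each edge $\{s_i,s_j\}$ of the graph (that is, each pair of non-commuting generators) according to which of $s_i,s_j$ appears first, and two orderings give the same orientation precisely when they give the same Coxeter element. Under this dictionary, if $s$ is an initial letter of $c$ — equivalently a source of the associated quiver $Q_c$ — then the conjugate $c' = s c s^{-1} = s c s$ is the Coxeter element obtained by moving $s$ to the end of the word, which turns the source $s$ into a sink, i.e.\ reverses every arrow of $Q_c$ incident to $s$. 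Since the Coxeter graph is connected and any two acyclic orientations of a connected graph are joined by a finite chain of such source-to-sink reversals, it is enough to treat the case where $c$ and $c'$ differ by a single such move.

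The central step is then to understand how $\cam(W,c)$ changes under one source-to-sink move. Here I would use Reading's description of the $c$-sortable elements via $c$-sorting words to compare, element by element, the $c$-sortable and $c'$-sortable elements together with the weak orders they inherit. The expected outcome is that this comparison has exactly the local shape of a flip-flop: a single extremal element of the lattice, the one governed by the reflection $s$, is detached from its present position and reattached at the opposite extremity, while the remaining cover relations are unchanged. The fact that all Cambrian lattices for a fixed $W$ are orientations of one and the same unoriented $n$-regular exchange graph should make this tracking manageable, since only the orientation near the relocated vertex changes.

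Granting this identification, Proposition~\ref{flip_flop} (or, if the relocation turns out not to be of the simplest possible form, the more general flip-flop of Ladkani) provides the required derived equivalence between the two lattices, and transitivity of derived equivalence finishes the argument. The hard part will be precisely the matching in the middle step: proving that the passage from $c$ to $s c s^{-1}$ reorganizes the Cambrian lattice as a genuine flip-flop, and not in some more intricate fashion. This is exactly where Reading's combinatorics of sortable elements is indispensable, and it is the technical core of Ladkani's general theorem in \cite{ladkani_cluster}; once it is established, all the surrounding steps are formal.
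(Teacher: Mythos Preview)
The paper gives no proof of its own; it attributes the result to Ladkani \cite{ladkani_cluster} and states it without argument. So the comparison is really with Ladkani's original approach.

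Your overall strategy --- reduce to a chain of source-to-sink mutations of the Coxeter element and handle each step by a flip-flop derived equivalence --- is indeed the skeleton of Ladkani's proof, and the reduction to elementary moves via acyclic orientations is correct and standard.

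However, your description of what a single mutation does to the Cambrian lattice is wrong, and this is not a detail. You write that ``a single extremal element of the lattice \ldots\ is detached from its present position and reattached at the opposite extremity, while the remaining cover relations are unchanged.'' This is false already in type $\TA_3$: the two Cambrian lattices drawn in the paper each have $14$ elements and differ by far more than the relocation of one vertex --- compare the two Hasse diagrams. What actually happens under a source mutation at $s$ is that the lattice splits into two substantial pieces (in cluster language, the cluster-tilting objects containing the indecomposable projective at $s$ versus those containing its shift), and the passage from $c$ to $scs$ swaps the relative position of these two blocks. Both blocks are large in general, so the one-vertex flip-flop of Proposition~\ref{flip_flop} cannot possibly apply; one genuinely needs the general flip-flop of \cite{ladkani_universal}. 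Your parenthetical hedge anticipates this, but the main text commits to the wrong picture, and a reader following it would be misled. Moreover, identifying the two blocks and checking that the order relations between them have the shape the general flip-flop requires is exactly the substance of Ladkani's argument, and he carries it out inside the cluster category rather than through Reading's sortable-element combinatorics; it is not at all clear that the purely combinatorial route you propose would go through without that machinery.
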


This proposition is expected to hold in all cases, but has not
been proved yet, to the best of our knowledge.

\subsection{Easy types}

There are two situations when one can easily determine the
representation type of the poset $\cam(W,c)$.

\begin{lemma}
  If the rank $n$ of $W$ is at most $2$, every poset $\cam(W,c)$ is of
  finite representation type.
\end{lemma}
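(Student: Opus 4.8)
The plan is to reduce every $\cam(W,c)$ to a hereditary algebra of Dynkin type by a single application of the flip-flop of Proposition~\ref{flip_flop}. If $n\leq 1$ the poset is a point or a two-element chain, whose incidence algebra is the hereditary algebra of an orientation of $\TA_1$ or $\TA_2$, hence of finite representation type; so the substance is the case $n=2$, where $W$ is a dihedral group $I_2(m)$ with simple reflections $s_1,s_2$ and $c\in\{s_1s_2,s_2s_1\}$.

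The first and only delicate step is to pin down the Hasse diagram of $\cam(I_2(m),c)$. The two Coxeter elements are exchanged by the diagram automorphism $s_1\leftrightarrow s_2$, so I would treat $c=s_1s_2$. Running through the $c$-sortable elements, one obtains the entire $s_1$-chain $e\lessdot s_1\lessdot s_1s_2\lessdot\cdots\lessdot w_0$ together with the single extra element $s_2$; every longer prefix $s_2s_1,\,s_2s_1s_2,\dots$ of the $s_2$-chain violates the nestedness condition and is not $c$-sortable. This yields exactly $m+2$ elements, in agreement with the Coxeter--Catalan number, and shows that the Hasse diagram is a cycle with unique minimum $\hat 0=e$ and unique maximum $\hat 1=w_0$ built from two maximal chains: a long one of length $m$ and a short one $\hat 0\lessdot s_2\lessdot\hat 1$ of length $2$. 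I expect this asymmetry to be the main obstacle: it is precisely what one must verify with care, since a balanced split would instead lead to an affine (tame) diagram at the end.

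Next I would apply the flip-flop at the top. By Proposition~\ref{flip_flop}, $\cam(I_2(m),c)$ is derived equivalent to $Y=(\cam(I_2(m),c)\setminus\{\hat 1\})\sqcup\{\hat 0'\}$, where $\hat 0'$ is a new global minimum. Deleting $\hat 1$ breaks the cycle into the two chains meeting at $\hat 0$, and inserting $\hat 0'$ below $\hat 0$ makes $Y$ a tree: a star with centre $\hat 0$ and three branches of lengths $1$ (down to $\hat 0'$), $1$ (the remnant $s_2$ of the short chain) and $m-1$ (the remnant of the long chain). Being a tree poset, its incidence algebra has no commutativity relations and is therefore the hereditary path algebra of an orientation of the Dynkin diagram $\TD_{m+2}$.

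Finally, since $\TD_{m+2}$ is a Dynkin diagram, Gabriel's theorem gives that $\kk Y$ is of finite representation type. Taking $\mathcal H=\operatorname{mod}\kk Y$, which is abelian and hereditary, the flip-flop provides $D^b(\cam(I_2(m),c))\cong D^b(\mathcal H)$, so Proposition~\ref{domine} shows that the representation type of $\cam(I_2(m),c)$ is dominated by that of $\mathcal H$ and is hence finite. The same computation covers the reducible group $\TA_1\times\TA_1=I_2(2)$, whose Cambrian lattice is the square and which flip-flops to $\TD_4$.
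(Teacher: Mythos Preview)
Your proof is correct and follows essentially the same route as the paper: describe the Hasse diagram of $\cam(I_2(m),c)$ as a bigon with one long side and one short side, apply a single flip-flop (Proposition~\ref{flip_flop}) at $\hat 1$ to obtain a tree of type $\TD_{m+2}$, and conclude via Proposition~\ref{domine}. Your version supplies more detail than the paper's proof—in particular the explicit enumeration of the $c$-sortable elements and the verification of the Coxeter--Catalan count—but the argument is the same.
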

\begin{proof}
  In rank $1$, this is trivial for the quiver with one vertex.

  In rank $2$, for the Coxeter groups of type $\TI_2(h)$, the Hasse
  diagram of $\cam(W,c)$ is obtained by adding a minimal element and a
  maximal element to the disjoint union of the chain poset of size
  $h - 1$ and the poset with one element.

  By Proposition \ref{flip_flop}, the poset $\cam(W,c)$ is derived equivalent to a
  poset which is just a quiver, and has type $\TD_{h+2}$. This quiver
  is therefore of finite representation type, and the conclusion
  follows by Proposition \ref{domine}.
\end{proof}
\smallskip

If the rank $n$ is at least $4$, then one can proceed as
follows. Recall that the cambrian lattice of a Coxeter group $W$ of rank $n$
has a $n$-regular Hasse diagram.

\begin{lemma}
  \label{rank4}
  Let $P$ be a lattice whose Hasse diagram is a $4$-regular
  graph. Then $P$ is of wild representation type.
\end{lemma}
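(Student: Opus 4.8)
The strategy is to realise inside $P$, as an induced subposet, one of the wild posets produced in Proposition~\ref{wild_subposet}, and then to invoke the first part of Lemma~\ref{sub_poset}. Since $P$ is a lattice it has a least element $\hat{0}$; because the Hasse diagram is $4$-regular and $\hat{0}$ has no lower cover, $\hat{0}$ is covered by exactly four atoms $a_1,a_2,a_3,a_4$, and dually $\hat{1}$ covers exactly four coatoms. Each atom, covering only $\hat 0$, therefore has three upper covers.

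First I would produce the affine cycle. Given two atoms $a,b$, set $j=a\vee b$ and choose a maximal chain $C_a$ from $\hat{0}$ to $j$ through $a$ and a maximal chain $C_b$ through $b$. Using that $j$ is the \emph{least} upper bound of $a$ and $b$, any common element of $C_a$ and $C_b$ other than $\hat{0}$ lies above both $a$ and $b$, hence above $j$, hence equals $j$; so the two chains meet only at $\hat{0}$ and $j$. The same least-upper-bound argument shows that an interior vertex of $C_a$ and an interior vertex of $C_b$ can never be comparable. Hence the induced order on $C_a\cup C_b$ is exactly that of a cycle, oriented upward along $C_a$ and downward along $C_b$; this is a non-circular orientation of $\TA_n^{(1)}$, and since distinct atoms force each chain to have length at least two, we get $n\geq 3$.

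The delicate point is the extra vertex $\omega$: Proposition~\ref{wild_subposet} requires $\omega$ to close a commutative square over an interior valley (or, dually, under an interior peak) of the cycle. The cycle just built is monotone, its only valley and peak being $\hat{0}$ and $j$, and neither can receive an $\omega$: two atoms never have two common covers (if $\omega$ and $j$ both covered $a$ and $b$, then $a\vee b$ would equal both), and atoms have no room below them. I would therefore build a cycle with a genuinely interior valley by running through two joins that share an atom: taking $j_{12}=a_1\vee a_2$ and $j_{13}=a_1\vee a_3$, I form the closed walk $\hat{0}\nearrow a_2\nearrow\cdots\nearrow j_{12}\searrow\cdots\searrow a_1\nearrow\cdots\nearrow j_{13}\searrow\cdots\searrow a_3\searrow\hat{0}$, in which $a_1$ is an interior valley with two shoulders $p,q$ (the elements covering $a_1$ on the two sides). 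The only possible common cover of $p$ and $q$ is $p\vee q$, so this is the forced candidate for $\omega$.

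The main obstacle is precisely this last step. I must choose the atoms and the maximal chains so that the resulting closed walk is a \emph{simple} cycle carrying only the cyclic order, and so that $\omega=p\vee q$ is comparable, among the vertices of the cycle, to $p$ and $q$ only. Crucially, I do not need $[a_1,p\vee q]$ to be a covering square in $P$: it suffices that no other chosen vertex lie between $p$ and $\omega$, and that $\omega$ be incomparable to the vertices lying strictly above the shoulders, so that in the induced subposet $\omega$ covers exactly $p$ and $q$. Controlling these comparabilities is where $4$-regularity and the lattice axioms must enter: the degree condition gives each atom three upper covers, hence enough freedom to route the two ascending chains out of $a_1$ through distinct covers and to keep $p\vee q$ off the cycle, while the meet/join relations exclude the spurious comparabilities. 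Passing to the opposite lattice $P^{\mathrm{op}}$, itself $4$-regular, lets me use the dual configuration (an interior peak with $\omega=p\wedge q$) whenever that is more convenient. Once the induced subposet is identified with a poset as in Proposition~\ref{wild_subposet}, that proposition shows it is of wild representation type, and Lemma~\ref{sub_poset}(1) transfers wildness to $P$.
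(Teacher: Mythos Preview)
Your proposal is a sketch rather than a proof, and you yourself flag the gap: the passage from the two-join walk $\hat 0\!\to\! a_2\!\to\! j_{12}\!\to\! a_1\!\to\! j_{13}\!\to\! a_3\!\to\!\hat 0$ to an induced subposet of the exact shape demanded by Proposition~\ref{wild_subposet} is never carried out. Concretely, nothing you have said prevents $j_{12}$ and $j_{13}$ from being comparable (or even equal), nor prevents an interior vertex of the $a_2\!\to\! j_{12}$ chain from lying below an interior vertex of the $a_1\!\to\! j_{13}$ chain: your incomparability argument for the first cycle used that both chains terminate at the \emph{same} join, and that is no longer available. Likewise, $\omega=p\vee q$ may equal $j_{12}$ or $j_{13}$, may sit on one of the chosen chains, or may be comparable to vertices of the cycle other than $p$ and $q$; ``enough freedom'' from $4$-regularity is an intuition, not an argument, and it is not clear it can be made into one without further case analysis.

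The paper bypasses all of this by never building long chains. It stays within two levels of $\hat 0$: either some cover $c$ of an atom $b_i$ covers no other atom, in which case $\{a,b_1,b_2,b_3,b_4,c\}$ induces a relation-free quiver that properly contains $\widetilde D_4$ and is therefore wild; or every cover of an atom covers a second atom, and then the lattice property together with the count ``four atoms, each with three upper covers'' forces the six pairwise joins $c_{i,j}=b_i\vee b_j$ to exist, each covering exactly $b_i$ and $b_j$. The induced subposet on $\{b_1,b_2,b_3,c_{1,2},c_{2,3},c_{1,3},c_{1,4}\}$ is then a relation-free quiver that is a hexagon with a pendant edge, properly containing $\widetilde A_5$, hence wild. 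In particular the paper does not invoke Proposition~\ref{wild_subposet} here at all; it uses only Lemma~\ref{sub_poset} and the standard fact that a path algebra whose graph strictly contains an affine Dynkin diagram is wild. That local, two-case argument is both shorter and complete; if you want to salvage your approach you would need to supply the missing comparability analysis, and it is not obvious that it goes through in full generality.
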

\begin{proof}
  Let $a$ be the unique minimal element of $P$ and let
  $b_1,b_2,b_3,b_4$ be the elements that cover $a$.

  Assume first that some $b_i$ is covered by an element $c$ that does
  not cover any other $b_j$. The induced quiver on the set
  $\{a,b_1,b_2,b_3,b_4\}$ has affine type $\TD_4^{(1)}$. Then, the
  induced quiver on the set $\{a,b_1,b_2,b_3,b_4,c\}$ has wild
  representation type.

  Otherwise, every element covering one of the $b_i$ covers at least
  another $b_j$. Because $P$ is a lattice, for every pair
  $1 \leq i < j \leq 4$, there is at most one element $c_{i,j}$
  covering both $b_i$ and $b_j$. So in particular there are at most
  $6$ elements covering one of the $b_i$. Using the hypothesis that
  the Hasse diagram is $4$-regular, one can deduce that, for every
  $1\leq i < j\leq 4$, there exists an element $c_{i,j}$ covering
  exactly both $b_i$ and $b_j$. Note that there can be no arrows
  between the $c_{i,j}$.

  Now consider the subquiver on the set
  $\{b_1,b_2,b_3,c_{1,2},c_{2,3},c_{1,3},c_{1,4}\}$. Without $c_{1,4}$,
  this would be an affine quiver of type $\TA_5^{(1)}$. This is therefore a
  quiver of wild representation type.
\end{proof}

\begin{lemma}
  \label{rank5}
  Let $P$ be a poset that has a minimal element $a$ covered by $n\geq
  5$ elements $b_1,\dots,b_n$. Then $P$ is of wild representation type.
\end{lemma}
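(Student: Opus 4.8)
The plan is to exhibit a small wild subposet of $P$ and invoke the first part of Lemma \ref{sub_poset}, which upgrades wildness from an induced subposet to the ambient poset. Since the only structural hypothesis is that $a$ is minimal and is covered by $n\geq 5$ elements, the natural candidate is the subposet $X$ induced on $\{a,b_1,b_2,b_3,b_4,b_5\}$, using any five of the covering elements.

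First I would check that the $b_i$ are pairwise incomparable: if $b_i<b_j$, then $a<b_i<b_j$ would contradict the fact that $b_j$ covers $a$. Consequently the Hasse diagram of $X$ is the star with center $a$ and five leaves $b_1,\dots,b_5$, all edges oriented upward from $a$. Because this diagram is a tree, there are no pairs of distinct directed paths sharing both endpoints, so the commuting relations are vacuous and the incidence algebra of $X$ is simply the path algebra of this star quiver.

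It then remains to recall that a connected quiver is of wild representation type precisely when its underlying graph is neither a Dynkin nor an extended Dynkin diagram. The five-leaf star strictly contains the affine diagram $\TD_4^{(1)}$ (delete one leaf), so it is of wild representation type, regardless of orientation. Applying Lemma \ref{sub_poset}(1) to the embedding $X\hookrightarrow P$ yields the claim. The only point requiring any care is the verification that $X$ carries no relations so that its incidence algebra is genuinely hereditary of star type; this is immediate from the tree shape of the Hasse diagram, so I do not expect a real obstacle in this argument.
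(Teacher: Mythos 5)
Your argument is essentially the paper's own proof, only spelled out in more detail: the paper also simply observes that the induced quiver on $\{a,b_1,\dots,b_n\}$ is wild because $n\geq 5$, which is exactly your star-quiver observation combined with Lemma \ref{sub_poset}. The proposal is correct.
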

\begin{proof}
  The induced quiver on $\{a,b_1,\dots,b_n\}$ has wild
  representation type because $n \geq 5$.
\end{proof}

In the case of rank $n=3$, the situation is less clear. By picking
appropriate subsets of vertices, we will now show that the posets
$\cam(W,c)$ are wild unless they are isomorphic to the cube. 

\subsection{Reducible types of rank $3$}

\begin{lemma}[Lenzing]
  \label{cube}
  The cube poset is of tame representation type.
\end{lemma}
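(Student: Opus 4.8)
The plan is to realize the incidence algebra $A$ of the cube as a \emph{tubular} algebra and to transfer its representation type from a tame hereditary category via Proposition~\ref{domine}. The guiding observation is that the cube is the product poset $[2]\times[2]\times[2]$ (with $[2]=\{0<1\}$), so that $A$ is the threefold tensor product $\kk A_2\otimes_{\kk}\kk A_2\otimes_{\kk}\kk A_2$, where $A_2$ is the quiver $\bullet\to\bullet$; concretely $A$ is the commutative cube algebra, with all four square faces commuting, and it has eight simple modules and global dimension three. This is exactly the borderline sitting between the representation-finite square $\kk A_2\otimes_{\kk}\kk A_2$ and the wild four-dimensional cube, so one expects it to be tame rather than finite or wild.

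The heart of the argument, which is Lenzing's contribution, is to show that $A$ is derived equivalent to an abelian hereditary category of tame type. Precisely, I would produce a triangle equivalence $D^b(A)\cong D^b(\mathcal H)$ with $\mathcal H=\operatorname{coh}\mathbb X$ the category of coherent sheaves on a weighted projective line $\mathbb X$ of tubular weight type; the eight simple modules of $A$, being a derived invariant, force the weight type to be $(3,3,3)$, the unique tubular type whose canonical algebra has eight vertices. The category $\mathcal H$ is abelian and hereditary, and it is of tame representation type: its indecomposables are organized into a $\mathbb P^1(\kk)$-indexed family of tubes together with finitely many exceptional bundles, and no two-parameter family occurs. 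Realizing this equivalence amounts to exhibiting a tilting complex in $D^b(A)$ whose endomorphism algebra is a canonical tubular algebra, and then invoking Ringel's theory of tubular algebras; this step is the genuine obstacle, since it needs machinery well beyond the elementary poset manipulations (Proposition~\ref{flip_flop}, subposets) that settle the other cases of the trichotomy.

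Once the equivalence $D^b(A)\cong D^b(\mathcal H)$ is in place, Proposition~\ref{domine} shows that the representation type of $A$ is dominated by that of $\mathcal H$, hence $A$ is of finite or tame type and in particular not wild. To exclude finite type it suffices to produce infinitely many pairwise non-isomorphic indecomposable $A$-modules; this follows because tubular algebras are representation-infinite (equivalently, $\mathcal H$ already contains a genuine one-parameter family of indecomposables inside a single tube), and such a family can be written down explicitly on the cube in the spirit of the representations $M(\lambda)$ built in the proof of Proposition~\ref{wild_subposet}. Combining the two, $A$ is neither finite nor wild, and is therefore of tame representation type.
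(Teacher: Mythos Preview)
Your proposal is correct and follows essentially the same route as the paper: both argue that the incidence algebra of the cube is derived equivalent to $\operatorname{coh}\mathbb{X}(3,3,3)$ (the paper cites Lenzing and Ladkani for this), invoke the tameness of this hereditary category, and then apply Proposition~\ref{domine}. The only notable difference is in excluding finite type: the paper does this more concretely by observing that the six ``middle'' vertices of the cube (everything but $\hat 0$ and $\hat 1$) form a subposet whose Hasse diagram is an affine quiver of type $\TA_5^{(1)}$, and then applies Lemma~\ref{sub_poset}; this is quicker and more self-contained than appealing to the representation-infiniteness of tubular algebras.
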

\begin{proof}
The subposet of the cube consisting of all the vertices except the minimal and the maximal elements of the cube is a quiver of type $\TA_5^{(1)}$. By Lemma \ref{sub_poset}, the cube has an infinite representation type. Moreover, by Example $18.6.2$ of \cite{lenzing_coxeter} or by Example $2.2$ of \cite{ladkani_canonical}, the incidence algebra of the cube is derived equivalent to the weighted projective line $\mathbb{X}(3,3,3)$. By Section $4$ of \cite{lenzing-meltzer}, the category $\operatorname{coh}(\mathbb{X}(3,3,3))$ is tame. Since it is a hereditary category, the result follows from Proposition \ref{domine}. 
\end{proof}
%REFERENCE ? cf. mail de H. Lenzing
%
%This poset is known to be derived equivalent to an algebra of
%elliptic type $E_6^{(1,1)}$ or to a canonical algebra of weight type $(3,3,3)$.
%
%Maybe  http://www.math.uni-bonn.de/people/sefil/Papers/canonical.pdf

\begin{proposition}
  \label{prisme_pentagonal}
  Let $h\geqslant 3$. Then, the poset $\cam(W_{\TA_1 \TI_2(h)},c)$ is wild.
\end{proposition}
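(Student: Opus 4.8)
The first step is to make the poset explicit. Since $W_{\TA_1\TI_2(h)} = \TA_1\times\TI_2(h)$ and any Coxeter element is a product of Coxeter elements of the two factors, the lattice $\cam(W_{\TA_1\TI_2(h)},c)$ is isomorphic to the product poset $C_2\times \cam(\TI_2(h),c')$, where $C_2 = \{0<1\}$ is the two-element chain. By the rank-two description recalled above, $Q := \cam(\TI_2(h),c')$ has a minimum $\hat 0$, a maximum $\hat 1$, a chain $u_1\lhd\cdots\lhd u_{h-1}$ and a single middle element $w$ on the short branch, with $\hat 0\lhd u_1$, $u_{h-1}\lhd\hat 1$, $\hat 0\lhd w\lhd\hat 1$; its Hasse diagram is the $(h+2)$-cycle. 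Thus the object to study is the prism $P = C_2\times Q$, consisting of two copies $Q_0=\{0\}\times Q$ and $Q_1=\{1\}\times Q$ of this cycle joined by the vertical covers $(0,q)\lhd(1,q)$.

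The plan is to exhibit inside $P$ an induced subposet of the exact shape handled by Proposition \ref{wild_subposet}: a non-circular orientation $Y$ of an affine diagram $\TA_n^{(1)}$ together with one extra vertex $\omega$ joined by two edges as the apex of a commutative square. Wildness of $P$ then follows from Proposition \ref{wild_subposet} together with Lemma \ref{sub_poset}(1). The only commutative squares available in $P$ are its square faces $\{(0,q),(0,q'),(1,q),(1,q')\}$ over a cover $q\lhd q'$ of $Q$, so I take $\omega=(1,u_2)$, the top corner of the face over $u_1\lhd u_2$; its opposite corner $(0,u_1)$ will be a valley of $Y$ and the two side corners $(0,u_2)$ and $(1,u_1)$ will be the two edges joining $\omega$ to $Y$. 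For $Y$ I take the $(h+4)$-cycle
\[
(0,u_1) - (0,u_2) - \cdots - (0,u_{h-1}) - (0,\hat 1) - (0,w) - (1,w) - (1,\hat 0) - (1,u_1) - (0,u_1),
\]
which runs up the long chain and over the peak $\hat 1$ in the bottom copy, crosses to the top copy at $w$, and returns along the short branch there before closing through the vertical cover at $u_1$.

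The bulk of the work, and the main obstacle, is to verify that this $Y$ is genuinely an \emph{induced} cycle and that $\omega$ meets it in exactly the two prescribed edges; two sources of spurious Hasse edges must be excluded. First, inside a single copy, omitting interior elements of a chain can promote a gapped comparability to a cover; this is precisely why the bottom copy is made to contain the entire long chain $u_1<\cdots<u_{h-1}$ while the top copy contains only the contiguous arc $\{w,\hat 0,u_1\}$, so that each copy induces a path rather than acquiring a diagonal chord. Second, the apex $\omega=(1,u_2)$ a priori covers three elements, namely $(0,u_2)$, $(1,u_1)$ and its upper chain-neighbour $(1,u_3)$ (or $(1,\hat 1)$ when $h=3$); the routing of $Y$ is arranged exactly so that this third neighbour lies outside $Y$. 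One then checks that the only order relations between the two copies are the two vertical covers at $u_1$ and at $w$, whence the induced Hasse diagram of $Y$ is exactly the displayed cycle. It is non-circular because it has genuine peaks and valleys (for instance the valley $(0,u_1)$ and the peak $(0,\hat 1)$), so it is a non-circular orientation of $\TA_{h+3}^{(1)}$ with $h+3\geq 6\geq 3$, and $X=Y\sqcup\{\omega\}$ fits Proposition \ref{wild_subposet}. Since this construction is uniform in $h\geq 3$, the poset $P$ is wild, as claimed.
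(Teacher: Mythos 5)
Your proof is correct and follows essentially the same route as the paper: both identify $\cam(W_{\TA_1\TI_2(h)},c)$ as the prism over the $(h+2)$-cycle and exhibit an induced subposet consisting of a non-circularly oriented affine cycle together with one extra vertex forming the apex of a commutative square, then invoke Proposition \ref{wild_subposet} and Lemma \ref{sub_poset}. The only difference is presentational: the paper displays the chosen subposet in a marked Hasse diagram, whereas you describe it explicitly and verify the induced-subposet conditions uniformly in $h$.
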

\begin{proof}
The poset attached to the reducible type $\TA_1 \TI_2(h)$ is the cartesian product of a segment and a $(h-1,1)$-cycle:  a poset obtained  by adding a minimal element and a maximal element to the disjoint union of the chain poset of size  $h - 1$ and the poset with one element. We exhibit a subposet which has wild representation type by Proposition \ref{wild_subposet}.

\begin{center}
\begin{tikzpicture}[rotate=270,>=latex,line join=bevel,scale=0.8]
  \node (node_0) at (57.0bp,7.0bp) [draw,draw=none] {$0$};
  \node (node_1) at (27.0bp,57.0bp) [draw,color=red] {$1$};
  \node (node_10) at (77.0bp,207.0bp) [draw,draw=none] {$10\dots$};
  \node (node_11) at (40.0bp,257.0bp) [draw,draw=none] {$11$};
  \node (node_2) at (87.0bp,57.0bp) [draw,color=red] {$3$};
  \node (node_3) at (102.0bp,107.0bp) [draw,color=red] {$6$};
  \node (node_4) at (104.0bp,157.0bp) [draw,color=red] {$8\dots$};
  \node (node_5) at (6.0bp,207.0bp) [draw,color=red] {$9$};
  \node (node_6) at (57.0bp,57.0bp) [draw,color=red] {$2$};
  \node (node_7) at (40.0bp,107.0bp) [draw,color=red] {$4$};
  \node (node_8) at (72.0bp,107.0bp) [draw,color=red] {$5$};
  \node (node_9) at (74.0bp,157.0bp) [draw,color=red] {$7$};
  \draw [black,->] (node_7) ..controls (40.0bp,134.41bp) and (40.0bp,207.41bp)  .. (node_11);
  \draw [black,->,very thick] (node_2) ..controls (90.778bp,70.09bp) and (94.308bp,81.386bp)  .. (node_3);
  \draw [black,->] (node_0) ..controls (64.691bp,20.305bp) and (72.08bp,32.128bp)  .. (node_2);
  \draw [black,->] (node_5) ..controls (14.767bp,220.38bp) and (23.267bp,232.38bp)  .. (node_11);
  \draw [black,->,very thick] (node_2) ..controls (83.222bp,70.09bp) and (79.692bp,81.386bp)  .. (node_8);
  \draw [black,->,very thick] (node_6) ..controls (52.718bp,70.09bp) and (48.717bp,81.386bp)  .. (node_7);
  \draw [black,->] (node_10) ..controls (67.405bp,220.45bp) and (58.016bp,232.63bp)  .. (node_11);
  \draw [black,->,very thick] (node_3) ..controls (94.822bp,120.31bp) and (87.925bp,132.13bp)  .. (node_9);
  \draw [black,->,very thick] (node_1) ..controls (23.24bp,84.5bp) and (12.803bp,158.06bp)  .. (node_5);
  \draw [black,->,very thick] (node_8) ..controls (72.498bp,119.95bp) and (72.954bp,130.9bp)  .. (node_9);
  \draw [black,->,very thick] (node_3) ..controls (102.5bp,119.95bp) and (102.95bp,130.9bp)  .. (node_4);
  \draw [black,->] (node_0) ..controls (57.0bp,19.947bp) and (57.0bp,30.897bp)  .. (node_6);
  \draw [black,->] (node_4) ..controls (97.119bp,170.23bp) and (90.568bp,181.88bp)  .. (node_10);
  \draw [black,->] (node_0) ..controls (49.309bp,20.305bp) and (41.92bp,32.128bp)  .. (node_1);
  \draw [black,->,very thick] (node_6) ..controls (60.778bp,70.09bp) and (64.308bp,81.386bp)  .. (node_8);
  \draw [black,->] (node_9) ..controls (74.747bp,169.95bp) and (75.431bp,180.9bp)  .. (node_10);
  \draw [black,->,very thick] (node_1) ..controls (30.255bp,70.019bp) and (33.267bp,81.141bp)  .. (node_7);
  \draw [black,->,very thick] (node_4) ..controls (82.745bp,168.41bp) and (43.289bp,187.74bp)  .. (node_5);
\end{tikzpicture}
\end{center}
\end{proof}

\subsection{Type $\TA_3$}

Let us show that the Tamari lattice with $14$ vertices has a wild representation type by exhibiting a wild subquiver.

\begin{center}
\begin{tikzpicture}[rotate=270,>=latex,line join=bevel,scale=0.8]
  \node (node_0) at (58.0bp,7.0bp) [draw,draw=none] {$0$};
  \node (node_1) at (24.0bp,57.0bp) [draw,color=red] {$1$};
  \node (node_10) at (58.0bp,107.0bp) [draw,color=red] {$10$};
  \node (node_11) at (107.0bp,207.0bp) [draw,draw=none] {$11$};
  \node (node_12) at (73.0bp,157.0bp) [draw,color=red] {$12$};
  \node (node_13) at (48.0bp,307.0bp) [draw,draw=none] {$13$};
  \node (node_2) at (106.0bp,157.0bp) [draw,color=blue,shape=circle] {$2$};
  \node (node_3) at (6.0bp,207.0bp) [draw,color=red] {$3$};
  \node (node_4) at (92.0bp,57.0bp) [draw,color=red] {$4$};
  \node (node_5) at (121.0bp,107.0bp) [draw,color=red] {$5$};
  \node (node_6) at (74.0bp,207.0bp) [draw,draw=none] {$6$};
  \node (node_7) at (91.0bp,107.0bp) [draw,color=red] {$7$};
  \node (node_8) at (40.0bp,157.0bp) [draw,color=red] {$8$};
  \node (node_9) at (48.0bp,257.0bp) [draw,draw=none] {$9$};
  \draw [black,->] (node_12) ..controls (73.249bp,169.95bp) and (73.477bp,180.9bp)  .. (node_6);
  \draw [black,->,very thick] (node_4) ..controls (91.751bp,69.947bp) and (91.523bp,80.897bp)  .. (node_7);
  \draw [black,->,very thick] (node_4) ..controls (99.434bp,70.305bp) and (106.58bp,82.128bp)  .. (node_5);
  \draw [black,->,very thick] (node_1) ..controls (45.038bp,68.411bp) and (84.091bp,87.736bp)  .. (node_5);
  \draw [black,->,very thick] (node_7) ..controls (94.778bp,120.09bp) and (98.308bp,131.39bp)  .. (node_2);
  \draw [black,->,very thick] (node_1) ..controls (20.788bp,84.409bp) and (11.909bp,157.41bp)  .. (node_3);
  \draw [black,->] (node_2) ..controls (106.25bp,169.95bp) and (106.48bp,180.9bp)  .. (node_11);
  \draw [black,->] (node_11) ..controls (94.696bp,228.44bp) and (69.902bp,269.62bp)  .. (node_13);
  \draw [black,->] (node_8) ..controls (41.641bp,178.1bp) and (44.88bp,217.78bp)  .. (node_9);
  \draw [black,->] (node_3) ..controls (14.702bp,228.3bp) and (32.092bp,268.88bp)  .. (node_13);
  \draw [black,->,very thick] (node_7) ..controls (86.466bp,120.09bp) and (82.23bp,131.39bp)  .. (node_12);
  \draw [black,->] (node_0) ..controls (66.767bp,20.377bp) and (75.267bp,32.377bp)  .. (node_4);
  \draw [black,->] (node_5) ..controls (122.53bp,123.75bp) and (123.96bp,145.77bp)  .. (121.0bp,164.0bp) .. controls (119.51bp,173.18bp) and (116.3bp,183.06bp)  .. (node_11);
  \draw [black,->,very thick] (node_10) ..controls (53.466bp,120.09bp) and (49.23bp,131.39bp)  .. (node_8);
  \draw [black,->,very thick] (node_8) ..controls (31.233bp,170.38bp) and (22.733bp,182.38bp)  .. (node_3);
  \draw [black,->] (node_9) ..controls (48.0bp,269.95bp) and (48.0bp,280.9bp)  .. (node_13);
  \draw [black,->] (node_2) ..controls (97.749bp,170.38bp) and (89.748bp,182.38bp)  .. (node_6);
  \draw [black,->] (node_0) ..controls (49.233bp,20.377bp) and (40.733bp,32.377bp)  .. (node_1);
  \draw [black,->] (node_6) ..controls (67.374bp,220.23bp) and (61.065bp,231.88bp)  .. (node_9);
  \draw [black,->] (node_0) ..controls (58.0bp,28.102bp) and (58.0bp,67.779bp)  .. (node_10);
  \draw [black,->,very thick] (node_10) ..controls (61.778bp,120.09bp) and (65.308bp,131.39bp)  .. (node_12);
\end{tikzpicture}
\end{center}

The induced quiver-with-relations on the marked vertices is just a
quiver, as one can check that no relation is implied by
the commuting relations in the initial Hasse diagram. Removing the vertex $2$ in
this quiver gives a quiver of affine type $\TA_{7}^{(1)}$.

Here is the similar wild sub-quiver for the other cambrian lattice of type $\TA_3$.

\begin{center}
\begin{tikzpicture}[rotate=270,>=latex,line join=bevel,scale=0.8]
  \node (node_0) at (40.0bp,7.0bp) [draw,draw=none] {$0$};
  \node (node_1) at (6.0bp,207.0bp) [draw,color=red] {$1$};
  \node (node_10) at (74.0bp,157.0bp) [draw,color=red] {$10$};
  \node (node_11) at (41.0bp,207.0bp) [draw,color=red] {$11$};
  \node (node_12) at (42.0bp,307.0bp) [draw,draw=none] {$12$};
  \node (node_13) at (74.0bp,107.0bp) [draw,color=blue,shape=circle] {$13$};
  \node (node_2) at (76.0bp,257.0bp) [draw,draw=none] {$2$};
  \node (node_3) at (74.0bp,207.0bp) [draw,color=red] {$3$};
  \node (node_4) at (104.0bp,207.0bp) [draw,color=red] {$4$};
  \node (node_5) at (40.0bp,57.0bp) [draw,draw=none] {$5$};
  \node (node_6) at (6.0bp,157.0bp) [draw,color=red] {$6$};
  \node (node_7) at (74.0bp,57.0bp) [draw,draw=none] {$7$};
  \node (node_8) at (107.0bp,157.0bp) [draw,color=red] {$8$};
  \node (node_9) at (37.0bp,157.0bp) [draw,color=red] {$9$};
  \draw [black,->,very thick] (node_10) ..controls (74.0bp,169.95bp) and (74.0bp,180.9bp)  .. (node_3);
  \draw [black,->,very thick] (node_8) ..controls (91.066bp,169.59bp) and (71.341bp,183.93bp)  .. (node_11);
  \draw [black,->,very thick] (node_6) ..controls (6.0bp,169.95bp) and (6.0bp,180.9bp)  .. (node_1);
  \draw [black,->] (node_2) ..controls (67.233bp,270.38bp) and (58.733bp,282.38bp)  .. (node_12);
  \draw [black,->] (node_7) ..controls (74.0bp,69.947bp) and (74.0bp,80.897bp)  .. (node_13);
  \draw [black,->] (node_3) ..controls (74.498bp,219.95bp) and (74.954bp,230.9bp)  .. (node_2);
  \draw [black,->,very thick] (node_9) ..controls (46.595bp,170.45bp) and (55.984bp,182.63bp)  .. (node_3);
  \draw [black,->] (node_11) ..controls (41.205bp,228.1bp) and (41.61bp,267.78bp)  .. (node_12);
  \draw [black,->] (node_5) ..controls (39.385bp,78.102bp) and (38.17bp,117.78bp)  .. (node_9);
  \draw [black,->,very thick] (node_8) ..controls (106.25bp,169.95bp) and (105.57bp,180.9bp)  .. (node_4);
  \draw [black,->] (node_5) ..controls (48.767bp,70.377bp) and (57.267bp,82.377bp)  .. (node_13);
  \draw [black,->] (node_1) ..controls (13.434bp,228.24bp) and (28.229bp,268.51bp)  .. (node_12);
  \draw [black,->,very thick] (node_10) ..controls (81.691bp,170.31bp) and (89.08bp,182.13bp)  .. (node_4);
  \draw [black,->] (node_0) ..controls (48.767bp,20.377bp) and (57.267bp,32.377bp)  .. (node_7);
  \draw [black,->] (node_0) ..controls (40.0bp,19.947bp) and (40.0bp,30.897bp)  .. (node_5);
  \draw [black,->] (node_7) ..controls (80.005bp,71.627bp) and (86.443bp,86.76bp)  .. (91.0bp,100.0bp) .. controls (95.64bp,113.48bp) and (100.02bp,129.1bp)  .. (node_8);
  \draw [black,->] (node_0) ..controls (34.413bp,21.579bp) and (28.546bp,36.682bp)  .. (25.0bp,50.0bp) .. controls (16.598bp,81.56bp) and (10.815bp,119.41bp)  .. (node_6);
  \draw [black,->,very thick] (node_6) ..controls (15.077bp,170.45bp) and (23.958bp,182.63bp)  .. (node_11);
  \draw [black,->,very thick] (node_9) ..controls (29.053bp,170.31bp) and (21.417bp,182.13bp)  .. (node_1);
  \draw [black,->,very thick] (node_13) ..controls (74.0bp,119.95bp) and (74.0bp,130.9bp)  .. (node_10);
  \draw [black,->] (node_4) ..controls (96.822bp,220.31bp) and (89.925bp,232.13bp)  .. (node_2);
\end{tikzpicture}
\end{center}

\subsection{Type $\TB_3$}

Let us now consider one of the cambrian lattices of type $\TB_3$, with
$20$ vertices. Let us again show that this has wild representation
type by exhibiting a wild subquiver.

\begin{center}
\begin{tikzpicture}[rotate=270,>=latex,line join=bevel,scale=0.8]
\node (node_13) at (78.0bp,307.0bp) [draw,color=red] {$13$};
  \node (node_14) at (92.0bp,7.0bp) [draw,draw=none] {$14$};
  \node (node_18) at (55.0bp,57.0bp) [draw,draw=none] {$18$};
  \node (node_19) at (44.0bp,157.0bp) [draw,draw=none] {$19$};
  \node (node_9) at (45.0bp,307.0bp) [draw,color=red] {$9$};
  \node (node_8) at (61.0bp,107.0bp) [draw,draw=none] {$8$};
  \node (node_17) at (62.0bp,457.0bp) [draw,draw=none] {$17$};
  \node (node_16) at (112.0bp,257.0bp) [draw,color=red] {$16$};
  \node (node_5) at (6.0bp,257.0bp) [draw,color=red] {$5$};
  \node (node_4) at (46.0bp,357.0bp) [draw,draw=none] {$4$};
  \node (node_3) at (77.0bp,157.0bp) [draw,draw=none] {$3$};
  \node (node_2) at (126.0bp,57.0bp) [draw,color=red] {$2$};
  \node (node_1) at (40.0bp,207.0bp) [draw,color=red] {$1$};
  \node (node_0) at (92.0bp,107.0bp) [draw,draw=none] {$0$};
  \node (node_7) at (112.0bp,307.0bp) [draw,color=red] {$7$};
  \node (node_11) at (28.0bp,107.0bp) [draw,color=red] {$11$};
  \node (node_6) at (114.0bp,357.0bp) [draw,color=red] {$6$};
  \node (node_10) at (74.0bp,207.0bp) [draw,color=blue,shape=circle] {$10$};
  \node (node_15) at (77.0bp,257.0bp) [draw,color=red] {$15$};
  \node (node_12) at (62.0bp,407.0bp) [draw,draw=none] {$12$};
  \draw [black,->] (node_3) ..controls (76.253bp,169.95bp) and (75.569bp,180.9bp)  .. (node_10);
  \draw [black,->,very thick] (node_2) ..controls (105.19bp,68.193bp) and (67.584bp,86.612bp)  .. (node_11);
  \draw [black,->] (node_8) ..controls (56.718bp,120.09bp) and (52.717bp,131.39bp)  .. (node_19);
  \draw [black,->] (node_18) ..controls (48.119bp,70.233bp) and (41.568bp,81.88bp)  .. (node_11);
  \draw [black,->,very thick] (node_10) ..controls (74.747bp,219.95bp) and (75.431bp,230.9bp)  .. (node_15);
  \draw [black,->] (node_18) ..controls (56.493bp,69.947bp) and (57.862bp,80.897bp)  .. (node_8);
  \draw [black,->,very thick] (node_7) ..controls (112.5bp,319.95bp) and (112.95bp,330.9bp)  .. (node_6);
  \draw [black,->] (node_12) ..controls (62.0bp,419.95bp) and (62.0bp,430.9bp)  .. (node_17);
  \draw [black,->] (node_14) ..controls (92.0bp,28.102bp) and (92.0bp,67.779bp)  .. (node_0);
  \draw [black,->] (node_0) ..controls (88.222bp,120.09bp) and (84.692bp,131.39bp)  .. (node_3);
  \draw [black,->] (node_7) ..controls (101.61bp,328.37bp) and (80.75bp,369.25bp)  .. (node_12);
  \draw [black,->] (node_6) ..controls (103.19bp,378.37bp) and (81.5bp,419.25bp)  .. (node_17);
  \draw [black,->,very thick] (node_15) ..controls (68.749bp,270.38bp) and (60.748bp,282.38bp)  .. (node_9);
  \draw [black,->,very thick] (node_15) ..controls (77.249bp,269.95bp) and (77.477bp,280.9bp)  .. (node_13);
  \draw [black,->] (node_8) ..controls (65.03bp,120.09bp) and (68.795bp,131.39bp)  .. (node_3);
  \draw [black,->] (node_4) ..controls (50.03bp,370.09bp) and (53.795bp,381.39bp)  .. (node_12);
  \draw [black,->] (node_19) ..controls (43.004bp,169.95bp) and (42.092bp,180.9bp)  .. (node_1);
  \draw [black,->] (node_5) ..controls (10.73bp,284.47bp) and (24.42bp,356.77bp)  .. (44.0bp,414.0bp) .. controls (47.142bp,423.19bp) and (51.394bp,433.19bp)  .. (node_17);
  \draw [black,->,very thick] (node_1) ..controls (31.233bp,220.38bp) and (22.733bp,232.38bp)  .. (node_5);
  \draw [black,->] (node_9) ..controls (45.249bp,319.95bp) and (45.477bp,330.9bp)  .. (node_4);
  \draw [black,->,very thick] (node_1) ..controls (41.026bp,228.1bp) and (43.05bp,267.78bp)  .. (node_9);
  \draw [black,->] (node_0) ..controls (95.569bp,134.41bp) and (105.43bp,207.41bp)  .. (node_16);
  \draw [black,->] (node_14) ..controls (100.77bp,20.377bp) and (109.27bp,32.377bp)  .. (node_2);
  \draw [black,->] (node_14) ..controls (82.405bp,20.448bp) and (73.016bp,32.628bp)  .. (node_18);
  \draw [black,->,very thick] (node_11) ..controls (24.061bp,134.5bp) and (13.127bp,208.06bp)  .. (node_5);
  \draw [black,->] (node_13) ..controls (69.749bp,320.38bp) and (61.748bp,332.38bp)  .. (node_4);
  \draw [black,->,very thick] (node_2) ..controls (134.11bp,78.48bp) and (149.0bp,120.0bp)  .. (149.0bp,156.0bp) .. controls (149.0bp,156.0bp) and (149.0bp,156.0bp)  .. (149.0bp,258.0bp) .. controls (149.0bp,289.1bp) and (133.18bp,322.68bp)  .. (node_6);
  \draw [black,->] (node_19) ..controls (51.691bp,170.31bp) and (59.08bp,182.13bp)  .. (node_10);
  \draw [black,->,very thick] (node_16) ..controls (112.0bp,269.95bp) and (112.0bp,280.9bp)  .. (node_7);
  \draw [black,->,very thick] (node_16) ..controls (103.23bp,270.38bp) and (94.733bp,282.38bp)  .. (node_13);
\end{tikzpicture}
\end{center}

The induced quiver-with-relations on the marked vertices is just a
quiver, as one can check that no relation is implied by
the relations in the initial Hasse diagram. Removing the vertex $10$ in
this quiver gives a quiver of affine type $\TA_{9}^{(1)}$.

Here is the similar wild sub-quiver for the other cambrian lattice of type $\TB_3$.

\begin{center}
\begin{tikzpicture}[rotate=270,>=latex,line join=bevel,scale=0.8]
  \node (node_0) at (117.0bp,157.0bp) [draw,color=red] {$0$};
  \node (node_1) at (6.0bp,257.0bp) [draw,color=red] {$1$};
  \node (node_10) at (139.0bp,207.0bp) [draw,color=red] {$10$};
  \node (node_11) at (45.0bp,7.0bp) [draw,draw=none] {$11$};
  \node (node_12) at (37.0bp,457.0bp) [draw,draw=none] {$12$};
  \node (node_13) at (106.0bp,257.0bp) [draw,color=red] {$13$};
  \node (node_14) at (80.0bp,107.0bp) [draw,draw=none] {$14$};
  \node (node_15) at (43.0bp,207.0bp) [draw,color=red] {$15$};
  \node (node_16) at (75.0bp,357.0bp) [draw,draw=none] {$16$};
  \node (node_17) at (75.0bp,407.0bp) [draw,draw=none] {$17$};
  \node (node_18) at (45.0bp,57.0bp) [draw,draw=none] {$18$};
  \node (node_19) at (80.0bp,157.0bp) [draw,draw=none] {$19$};
  \node (node_2) at (80.0bp,57.0bp) [draw,draw=none] {$2$};
  \node (node_3) at (106.0bp,207.0bp) [draw,color=red] {$3$};
  \node (node_4) at (109.0bp,357.0bp) [draw,draw=none] {$4$};
  \node (node_5) at (73.0bp,257.0bp) [draw,color=red] {$5$};
  \node (node_6) at (37.0bp,257.0bp) [draw,color=red] {$6$};
  \node (node_7) at (6.0bp,207.0bp) [draw,color=red] {$7$};
  \node (node_8) at (109.0bp,307.0bp) [draw,color=blue,shape=circle] {$8$};
  \node (node_9) at (76.0bp,207.0bp) [draw,color=red] {$9$};
  \draw [black,->] (node_6) ..controls (37.0bp,289.91bp) and (37.0bp,399.07bp)  .. (node_12);
  \draw [black,->,very thick] (node_9) ..controls (75.253bp,219.95bp) and (74.569bp,230.9bp)  .. (node_5);
  \draw [black,->] (node_17) ..controls (65.145bp,420.45bp) and (55.503bp,432.63bp)  .. (node_12);
  \draw [black,->,very thick] (node_15) ..controls (50.691bp,220.31bp) and (58.08bp,232.13bp)  .. (node_5);
  \draw [black,->] (node_4) ..controls (100.23bp,370.38bp) and (91.733bp,382.38bp)  .. (node_17);
  \draw [black,->] (node_16) ..controls (75.0bp,369.95bp) and (75.0bp,380.9bp)  .. (node_17);
  \draw [black,->,very thick] (node_7) ..controls (13.947bp,220.31bp) and (21.583bp,232.13bp)  .. (node_6);
  \draw [black,->,very thick] (node_15) ..controls (33.405bp,220.45bp) and (24.016bp,232.63bp)  .. (node_1);
  \draw [black,->,very thick] (node_10) ..controls (113.48bp,220.01bp) and (73.977bp,238.6bp)  .. (node_6);
  \draw [black,->] (node_8) ..controls (109.0bp,319.95bp) and (109.0bp,330.9bp)  .. (node_4);
  \draw [black,->] (node_11) ..controls (37.992bp,21.4bp) and (30.758bp,36.388bp)  .. (27.0bp,50.0bp) .. controls (13.192bp,100.02bp) and (8.3144bp,161.66bp)  .. (node_7);
  \draw [black,->,very thick] (node_0) ..controls (114.25bp,170.02bp) and (111.7bp,181.14bp)  .. (node_3);
  \draw [black,->,very thick] (node_7) ..controls (6.0bp,219.95bp) and (6.0bp,230.9bp)  .. (node_1);
  \draw [black,->] (node_19) ..controls (79.004bp,169.95bp) and (78.092bp,180.9bp)  .. (node_9);
  \draw [black,->,very thick] (node_3) ..controls (106.0bp,219.95bp) and (106.0bp,230.9bp)  .. (node_13);
  \draw [black,->] (node_11) ..controls (45.0bp,19.947bp) and (45.0bp,30.897bp)  .. (node_18);
  \draw [black,->] (node_11) ..controls (54.077bp,20.448bp) and (62.958bp,32.628bp)  .. (node_2);
  \draw [black,->,very thick] (node_0) ..controls (122.57bp,170.16bp) and (127.83bp,181.63bp)  .. (node_10);
  \draw [black,->,very thick] (node_9) ..controls (83.691bp,220.31bp) and (91.08bp,232.13bp)  .. (node_13);
  \draw [black,->] (node_8) ..controls (100.23bp,320.38bp) and (91.733bp,332.38bp)  .. (node_16);
  \draw [black,->] (node_2) ..controls (85.745bp,71.721bp) and (92.017bp,86.914bp)  .. (97.0bp,100.0bp) .. controls (102.23bp,113.75bp) and (107.84bp,129.54bp)  .. (node_0);
  \draw [black,->] (node_10) ..controls (137.37bp,229.95bp) and (133.26bp,276.47bp)  .. (124.0bp,314.0bp) .. controls (121.76bp,323.09bp) and (118.28bp,332.97bp)  .. (node_4);
  \draw [black,->] (node_18) ..controls (54.077bp,70.448bp) and (62.958bp,82.628bp)  .. (node_14);
  \draw [black,->] (node_5) ..controls (73.41bp,278.1bp) and (74.22bp,317.78bp)  .. (node_16);
  \draw [black,->] (node_2) ..controls (80.0bp,69.947bp) and (80.0bp,80.897bp)  .. (node_14);
  \draw [black,->,very thick] (node_13) ..controls (106.75bp,269.95bp) and (107.43bp,280.9bp)  .. (node_8);
  \draw [black,->] (node_1) ..controls (10.996bp,289.91bp) and (28.086bp,399.07bp)  .. (node_12);
  \draw [black,->] (node_18) ..controls (44.643bp,84.409bp) and (43.657bp,157.41bp)  .. (node_15);
  \draw [black,->] (node_19) ..controls (86.626bp,170.23bp) and (92.935bp,181.88bp)  .. (node_3);
  \draw [black,->] (node_14) ..controls (80.0bp,119.95bp) and (80.0bp,130.9bp)  .. (node_19);
\end{tikzpicture}
\end{center}

\subsection{Type $\TyH_3$}

Let us now consider one of the cambrian lattices of type $\TyH_3$, with
$32$ vertices. Let us again show that this has wild representation
type by exhibiting a wild subquiver.

\begin{center}
\begin{tikzpicture}[rotate=270,>=latex,line join=bevel,scale=0.6]
  \node (node_0) at (81.0bp,307.0bp) [draw,color=red] {$0$};
  \node (node_1) at (79.0bp,357.0bp) [draw,color=red] {$1$};
  \node (node_10) at (115.0bp,57.0bp) [draw,draw=none] {$10$};
  \node (node_11) at (77.0bp,457.0bp) [draw,draw=none] {$11$};
  \node (node_12) at (76.0bp,207.0bp) [draw,draw=none] {$12$};
  \node (node_13) at (150.0bp,107.0bp) [draw,color=red] {$13$};
  \node (node_14) at (12.0bp,607.0bp) [draw,color=red] {$14$};
  \node (node_15) at (115.0bp,7.0bp) [draw,draw=none] {$15$};
  \node (node_16) at (113.0bp,107.0bp) [draw,draw=none] {$16$};
  \node (node_17) at (65.0bp,707.0bp) [draw,draw=none] {$17$};
  \node (node_18) at (114.0bp,307.0bp) [draw,color=red] {$18$};
  \node (node_19) at (12.0bp,557.0bp) [draw,color=red] {$19$};
  \node (node_2) at (76.0bp,257.0bp) [draw,draw=none] {$2$};
  \node (node_20) at (113.0bp,157.0bp) [draw,draw=none] {$20$};
  \node (node_21) at (8.0bp,357.0bp) [draw,color=red] {$21$};
  \node (node_22) at (42.0bp,407.0bp) [draw,color=red] {$22$};
  \node (node_23) at (150.0bp,57.0bp) [draw,color=red] {$23$};
  \node (node_24) at (113.0bp,257.0bp) [draw,color=red] {$24$};
  \node (node_25) at (110.0bp,757.0bp) [draw,draw=none] {$25$};
  \node (node_26) at (77.0bp,407.0bp) [draw,color=blue,shape=circle] {$26$};
  \node (node_27) at (47.0bp,607.0bp) [draw,draw=none] {$27$};
  \node (node_28) at (47.0bp,507.0bp) [draw,draw=none] {$28$};
  \node (node_29) at (110.0bp,657.0bp) [draw,color=red] {$29$};
  \node (node_3) at (51.0bp,307.0bp) [draw,color=red] {$3$};
  \node (node_30) at (151.0bp,357.0bp) [draw,color=red] {$30$};
  \node (node_31) at (77.0bp,107.0bp) [draw,draw=none] {$31$};
  \node (node_4) at (47.0bp,557.0bp) [draw,draw=none] {$4$};
  \node (node_5) at (44.0bp,457.0bp) [draw,draw=none] {$5$};
  \node (node_6) at (81.0bp,507.0bp) [draw,draw=none] {$6$};
  \node (node_7) at (77.0bp,157.0bp) [draw,draw=none] {$7$};
  \node (node_8) at (65.0bp,657.0bp) [draw,draw=none] {$8$};
  \node (node_9) at (80.0bp,607.0bp) [draw,draw=none] {$9$};
  \draw [black,->] (node_29) ..controls (110.0bp,678.1bp) and (110.0bp,717.78bp)  .. (node_25);
  \draw [black,->,very thick] (node_19) ..controls (12.0bp,569.95bp) and (12.0bp,580.9bp)  .. (node_14);
  \draw [black,->,very thick] (node_14) ..controls (36.267bp,619.89bp) and (71.221bp,637.01bp)  .. (node_29);
  \draw [black,->] (node_15) ..controls (115.0bp,19.947bp) and (115.0bp,30.897bp)  .. (node_10);
  \draw [black,->] (node_22) ..controls (42.498bp,419.95bp) and (42.954bp,430.9bp)  .. (node_5);
  \draw [black,->] (node_31) ..controls (77.0bp,119.95bp) and (77.0bp,130.9bp)  .. (node_7);
  \draw [black,->,very thick] (node_21) ..controls (8.6447bp,389.91bp) and (10.85bp,499.07bp)  .. (node_19);
  \draw [black,->,very thick] (node_13) ..controls (150.35bp,128.85bp) and (151.0bp,170.95bp)  .. (151.0bp,206.0bp) .. controls (151.0bp,206.0bp) and (151.0bp,206.0bp)  .. (151.0bp,258.0bp) .. controls (151.0bp,287.02bp) and (151.0bp,320.88bp)  .. (node_30);
  \draw [black,->,very thick] (node_23) ..controls (150.0bp,69.947bp) and (150.0bp,80.897bp)  .. (node_13);
  \draw [black,->,very thick] (node_18) ..controls (123.6bp,320.45bp) and (132.98bp,332.63bp)  .. (node_30);
  \draw [black,->] (node_15) ..controls (108.85bp,21.684bp) and (102.16bp,36.855bp)  .. (97.0bp,50.0bp) .. controls (91.626bp,63.695bp) and (86.038bp,79.495bp)  .. (node_31);
  \draw [black,->] (node_2) ..controls (69.628bp,270.23bp) and (63.563bp,281.88bp)  .. (node_3);
  \draw [black,->] (node_4) ..controls (47.0bp,569.95bp) and (47.0bp,580.9bp)  .. (node_27);
  \draw [black,->] (node_7) ..controls (76.751bp,169.95bp) and (76.523bp,180.9bp)  .. (node_12);
  \draw [black,->,very thick] (node_1) ..controls (78.502bp,369.95bp) and (78.046bp,380.9bp)  .. (node_26);
  \draw [black,->] (node_6) ..controls (80.795bp,528.1bp) and (80.39bp,567.78bp)  .. (node_9);
  \draw [black,->] (node_16) ..controls (103.66bp,120.45bp) and (94.529bp,132.63bp)  .. (node_7);
  \draw [black,->] (node_17) ..controls (76.804bp,720.59bp) and (88.562bp,733.13bp)  .. (node_25);
  \draw [black,->] (node_28) ..controls (47.0bp,519.95bp) and (47.0bp,530.9bp)  .. (node_4);
  \draw [black,->,very thick] (node_23) ..controls (164.6bp,78.539bp) and (190.0bp,118.51bp)  .. (190.0bp,156.0bp) .. controls (190.0bp,156.0bp) and (190.0bp,156.0bp)  .. (190.0bp,508.0bp) .. controls (190.0bp,563.91bp) and (145.27bp,618.96bp)  .. (node_29);
  \draw [black,->] (node_10) ..controls (114.5bp,69.947bp) and (114.05bp,80.897bp)  .. (node_16);
  \draw [black,->] (node_26) ..controls (77.0bp,419.95bp) and (77.0bp,430.9bp)  .. (node_11);
  \draw [black,->] (node_11) ..controls (77.996bp,469.95bp) and (78.908bp,480.9bp)  .. (node_6);
  \draw [black,->,very thick] (node_3) ..controls (58.178bp,320.31bp) and (65.075bp,332.13bp)  .. (node_1);
  \draw [black,->] (node_27) ..controls (51.534bp,620.09bp) and (55.77bp,631.39bp)  .. (node_8);
  \draw [black,->,very thick] (node_24) ..controls (104.75bp,270.38bp) and (96.748bp,282.38bp)  .. (node_0);
  \draw [black,->] (node_2) ..controls (77.245bp,269.95bp) and (78.385bp,280.9bp)  .. (node_0);
  \draw [black,->,very thick] (node_21) ..controls (16.767bp,370.38bp) and (25.267bp,382.38bp)  .. (node_22);
  \draw [black,->] (node_5) ..controls (44.747bp,469.95bp) and (45.431bp,480.9bp)  .. (node_28);
  \draw [black,->] (node_31) ..controls (52.848bp,127.67bp) and (13.0bp,164.95bp)  .. (13.0bp,206.0bp) .. controls (13.0bp,206.0bp) and (13.0bp,206.0bp)  .. (13.0bp,258.0bp) .. controls (13.0bp,287.06bp) and (10.78bp,320.91bp)  .. (node_21);
  \draw [black,->] (node_9) ..controls (76.222bp,620.09bp) and (72.692bp,631.39bp)  .. (node_8);
  \draw [black,->,very thick] (node_0) ..controls (80.502bp,319.95bp) and (80.046bp,330.9bp)  .. (node_1);
  \draw [black,->] (node_30) ..controls (151.35bp,378.85bp) and (152.0bp,420.95bp)  .. (152.0bp,456.0bp) .. controls (152.0bp,456.0bp) and (152.0bp,456.0bp)  .. (152.0bp,658.0bp) .. controls (152.0bp,689.86bp) and (133.01bp,723.18bp)  .. (node_25);
  \draw [black,->] (node_8) ..controls (65.0bp,669.95bp) and (65.0bp,680.9bp)  .. (node_17);
  \draw [black,->] (node_12) ..controls (76.0bp,219.95bp) and (76.0bp,230.9bp)  .. (node_2);
  \draw [black,->] (node_10) ..controls (124.08bp,70.448bp) and (132.96bp,82.628bp)  .. (node_13);
  \draw [black,->,very thick] (node_24) ..controls (113.25bp,269.95bp) and (113.48bp,280.9bp)  .. (node_18);
  \draw [black,->,very thick] (node_3) ..controls (49.154bp,328.1bp) and (45.51bp,367.78bp)  .. (node_22);
  \draw [black,->] (node_26) ..controls (68.491bp,420.38bp) and (60.241bp,432.38bp)  .. (node_5);
  \draw [black,->] (node_20) ..controls (103.4bp,170.45bp) and (94.016bp,182.63bp)  .. (node_12);
  \draw [black,->] (node_11) ..controls (69.309bp,470.31bp) and (61.92bp,482.13bp)  .. (node_28);
  \draw [black,->] (node_14) ..controls (23.017bp,628.37bp) and (45.125bp,669.25bp)  .. (node_17);
  \draw [black,->] (node_16) ..controls (113.0bp,119.95bp) and (113.0bp,130.9bp)  .. (node_20);
  \draw [black,->] (node_15) ..controls (124.08bp,20.448bp) and (132.96bp,32.628bp)  .. (node_23);
  \draw [black,->] (node_20) ..controls (113.0bp,178.1bp) and (113.0bp,217.78bp)  .. (node_24);
  \draw [black,->] (node_19) ..controls (21.077bp,570.45bp) and (29.958bp,582.63bp)  .. (node_27);
  \draw [black,->] (node_4) ..controls (55.509bp,570.38bp) and (63.759bp,582.38bp)  .. (node_9);
  \draw [black,->] (node_18) ..controls (112.2bp,334.27bp) and (106.57bp,406.1bp)  .. (94.0bp,464.0bp) .. controls (92.04bp,473.02bp) and (89.029bp,482.9bp)  .. (node_6);
\end{tikzpicture}
\end{center}

The induced quiver-with-relations on the marked vertices is just a
quiver, as one can check that no relation is implied by
the relations in the initial Hasse diagram. Removing the vertex $26$ in
this quiver gives a quiver of affine type $\TA_{12}^{(1)}$.

Here is the similar wild sub-quiver for the other cambrian lattice of type $\TyH_3$.

\begin{center}
\begin{tikzpicture}[rotate=270,>=latex,line join=bevel,scale=0.6]
  \node (node_0) at (83.0bp,107.0bp) [draw,draw=none] {$0$};
  \node (node_1) at (46.0bp,157.0bp) [draw,draw=none] {$1$};
  \node (node_10) at (45.0bp,507.0bp) [draw,draw=none] {$10$};
  \node (node_11) at (43.0bp,357.0bp) [draw,color=red] {$11$};
  \node (node_12) at (144.0bp,357.0bp) [draw,color=red] {$12$};
  \node (node_13) at (46.0bp,207.0bp) [draw,draw=none] {$13$};
  \node (node_14) at (118.0bp,307.0bp) [draw,color=red] {$14$};
  \node (node_15) at (156.0bp,307.0bp) [draw,color=red] {$15$};
  \node (node_16) at (116.0bp,57.0bp) [draw,draw=none] {$16$};
  \node (node_17) at (116.0bp,7.0bp) [draw,draw=none] {$17$};
  \node (node_18) at (78.0bp,407.0bp) [draw,draw=none] {$18$};
  \node (node_19) at (98.0bp,557.0bp) [draw,draw=none] {$19$};
  \node (node_2) at (64.0bp,657.0bp) [draw,draw=none] {$2$};
  \node (node_20) at (98.0bp,657.0bp) [draw,draw=none] {$20$};
  \node (node_21) at (63.0bp,557.0bp) [draw,draw=none] {$21$};
  \node (node_22) at (81.0bp,157.0bp) [draw,draw=none] {$22$};
  \node (node_23) at (80.0bp,307.0bp) [draw,color=red] {$23$};
  \node (node_24) at (43.0bp,407.0bp) [draw,color=red] {$24$};
  \node (node_25) at (168.0bp,757.0bp) [draw,draw=none] {$25$};
  \node (node_26) at (81.0bp,207.0bp) [draw,draw=none] {$26$};
  \node (node_27) at (168.0bp,457.0bp) [draw,color=red] {$27$};
  \node (node_28) at (80.0bp,457.0bp) [draw,draw=none] {$28$};
  \node (node_29) at (8.0bp,407.0bp) [draw,color=red] {$29$};
  \node (node_3) at (111.0bp,357.0bp) [draw,color=red] {$3$};
  \node (node_30) at (78.0bp,357.0bp) [draw,color=red] {$30$};
  \node (node_31) at (80.0bp,507.0bp) [draw,draw=none] {$31$};
  \node (node_4) at (46.0bp,257.0bp) [draw,color=blue,shape=circle] {$4$};
  \node (node_5) at (9.0bp,357.0bp) [draw,color=red] {$5$};
  \node (node_6) at (98.0bp,707.0bp) [draw,draw=none] {$6$};
  \node (node_7) at (46.0bp,307.0bp) [draw,color=red] {$7$};
  \node (node_8) at (83.0bp,57.0bp) [draw,draw=none] {$8$};
  \node (node_9) at (64.0bp,607.0bp) [draw,draw=none] {$9$};
  \draw [black,->,very thick] (node_5) ..controls (8.7511bp,369.95bp) and (8.523bp,380.9bp)  .. (node_29);
  \draw [black,->] (node_26) ..controls (71.923bp,220.45bp) and (63.042bp,232.63bp)  .. (node_4);
  \draw [black,->] (node_22) ..controls (71.923bp,170.45bp) and (63.042bp,182.63bp)  .. (node_13);
  \draw [black,->] (node_28) ..controls (70.923bp,470.45bp) and (62.042bp,482.63bp)  .. (node_10);
  \draw [black,->] (node_10) ..controls (49.534bp,520.09bp) and (53.77bp,531.39bp)  .. (node_21);
  \draw [black,->,very thick] (node_11) ..controls (43.0bp,369.95bp) and (43.0bp,380.9bp)  .. (node_24);
  \draw [black,->] (node_28) ..controls (80.0bp,469.95bp) and (80.0bp,480.9bp)  .. (node_31);
  \draw [black,->] (node_17) ..controls (116.0bp,19.947bp) and (116.0bp,30.897bp)  .. (node_16);
  \draw [black,->] (node_27) ..controls (168.0bp,478.85bp) and (168.0bp,520.95bp)  .. (168.0bp,556.0bp) .. controls (168.0bp,556.0bp) and (168.0bp,556.0bp)  .. (168.0bp,658.0bp) .. controls (168.0bp,687.02bp) and (168.0bp,720.88bp)  .. (node_25);
  \draw [black,->] (node_26) ..controls (80.795bp,228.1bp) and (80.39bp,267.78bp)  .. (node_23);
  \draw [black,->] (node_19) ..controls (98.0bp,578.1bp) and (98.0bp,617.78bp)  .. (node_20);
  \draw [black,->] (node_8) ..controls (83.0bp,69.947bp) and (83.0bp,80.897bp)  .. (node_0);
  \draw [black,->] (node_24) ..controls (43.41bp,428.1bp) and (44.22bp,467.78bp)  .. (node_10);
  \draw [black,->] (node_11) ..controls (52.077bp,370.45bp) and (60.958bp,382.63bp)  .. (node_18);
  \draw [black,->] (node_17) ..controls (130.6bp,28.539bp) and (156.0bp,68.512bp)  .. (156.0bp,106.0bp) .. controls (156.0bp,106.0bp) and (156.0bp,106.0bp)  .. (156.0bp,208.0bp) .. controls (156.0bp,237.02bp) and (156.0bp,270.88bp)  .. (node_15);
  \draw [black,->,very thick] (node_7) ..controls (54.251bp,320.38bp) and (62.252bp,332.38bp)  .. (node_30);
  \draw [black,->] (node_16) ..controls (116.7bp,78.849bp) and (118.0bp,120.94bp)  .. (118.0bp,156.0bp) .. controls (118.0bp,156.0bp) and (118.0bp,156.0bp)  .. (118.0bp,208.0bp) .. controls (118.0bp,237.02bp) and (118.0bp,270.88bp)  .. (node_14);
  \draw [black,->] (node_12) ..controls (171.84bp,376.37bp) and (219.0bp,412.51bp)  .. (219.0bp,456.0bp) .. controls (219.0bp,456.0bp) and (219.0bp,456.0bp)  .. (219.0bp,658.0bp) .. controls (219.0bp,691.32bp) and (195.52bp,724.55bp)  .. (node_25);
  \draw [black,->] (node_31) ..controls (84.534bp,520.09bp) and (88.77bp,531.39bp)  .. (node_19);
  \draw [black,->] (node_6) ..controls (115.0bp,719.66bp) and (136.22bp,734.21bp)  .. (node_25);
  \draw [black,->,very thick] (node_14) ..controls (124.63bp,320.23bp) and (130.93bp,331.88bp)  .. (node_12);
  \draw [black,->,very thick] (node_7) ..controls (45.253bp,319.95bp) and (44.569bp,330.9bp)  .. (node_11);
  \draw [black,->] (node_8) ..controls (75.36bp,78.236bp) and (60.153bp,118.51bp)  .. (node_1);
  \draw [black,->] (node_1) ..controls (38.94bp,171.39bp) and (31.672bp,186.36bp)  .. (28.0bp,200.0bp) .. controls (14.53bp,250.02bp) and (10.589bp,311.66bp)  .. (node_5);
  \draw [black,->] (node_9) ..controls (64.0bp,619.95bp) and (64.0bp,630.9bp)  .. (node_2);
  \draw [black,->] (node_16) ..controls (107.49bp,70.377bp) and (99.241bp,82.377bp)  .. (node_0);
  \draw [black,->] (node_0) ..controls (82.502bp,119.95bp) and (82.046bp,130.9bp)  .. (node_22);
  \draw [black,->,very thick] (node_15) ..controls (153.0bp,320.02bp) and (150.21bp,331.14bp)  .. (node_12);
  \draw [black,->] (node_3) ..controls (108.9bp,389.91bp) and (101.74bp,499.07bp)  .. (node_19);
  \draw [black,->,very thick] (node_29) ..controls (19.035bp,412.0bp) and (22.141bp,413.08bp)  .. (25.0bp,414.0bp) .. controls (69.579bp,428.4bp) and (122.63bp,443.44bp)  .. (node_27);
  \draw [black,->] (node_1) ..controls (46.0bp,169.95bp) and (46.0bp,180.9bp)  .. (node_13);
  \draw [black,->] (node_18) ..controls (78.498bp,419.95bp) and (78.954bp,430.9bp)  .. (node_28);
  \draw [black,->] (node_29) ..controls (6.9426bp,428.85bp) and (5.0bp,470.93bp)  .. (5.0bp,506.0bp) .. controls (5.0bp,506.0bp) and (5.0bp,506.0bp)  .. (5.0bp,558.0bp) .. controls (5.0bp,592.82bp) and (32.669bp,625.91bp)  .. (node_2);
  \draw [black,->] (node_20) ..controls (98.0bp,669.95bp) and (98.0bp,680.9bp)  .. (node_6);
  \draw [black,->,very thick] (node_23) ..controls (87.947bp,320.31bp) and (95.583bp,332.13bp)  .. (node_3);
  \draw [black,->] (node_31) ..controls (75.718bp,520.09bp) and (71.717bp,531.39bp)  .. (node_21);
  \draw [black,->,very thick] (node_15) ..controls (157.85bp,321.76bp) and (159.8bp,336.98bp)  .. (161.0bp,350.0bp) .. controls (163.95bp,381.93bp) and (166.12bp,419.39bp)  .. (node_27);
  \draw [black,->,very thick] (node_4) ..controls (46.0bp,269.95bp) and (46.0bp,280.9bp)  .. (node_7);
  \draw [black,->] (node_17) ..controls (107.49bp,20.377bp) and (99.241bp,32.377bp)  .. (node_8);
  \draw [black,->,very thick] (node_5) ..controls (17.767bp,370.38bp) and (26.267bp,382.38bp)  .. (node_24);
  \draw [black,->,very thick] (node_23) ..controls (79.502bp,319.95bp) and (79.046bp,330.9bp)  .. (node_30);
  \draw [black,->] (node_13) ..controls (46.0bp,219.95bp) and (46.0bp,230.9bp)  .. (node_4);
  \draw [black,->] (node_9) ..controls (72.767bp,620.38bp) and (81.267bp,632.38bp)  .. (node_20);
  \draw [black,->] (node_2) ..controls (72.767bp,670.38bp) and (81.267bp,682.38bp)  .. (node_6);
  \draw [black,->] (node_21) ..controls (63.249bp,569.95bp) and (63.477bp,580.9bp)  .. (node_9);
  \draw [black,->] (node_30) ..controls (78.0bp,369.95bp) and (78.0bp,380.9bp)  .. (node_18);
  \draw [black,->,very thick] (node_14) ..controls (116.26bp,319.95bp) and (114.66bp,330.9bp)  .. (node_3);
  \draw [black,->] (node_22) ..controls (81.0bp,169.95bp) and (81.0bp,180.9bp)  .. (node_26);
\end{tikzpicture}
\end{center}

\begin{remark}
  Note the similarity in the shape of the wild subquivers for $\TA_3$,
  $\TB_3$ and $\TyH_3$. They all consist of something like a middle
  belt between a left part and a right part, plus one extra
  vertex. They have been obtained by carefully removing well-chosen maximal
  or minimal elements, until reaching this kind of configuration.
\end{remark}

\subsection{Weak order on the Coxeter group}

The cambrian lattice $\cam(W,c)$ is known to be a quotient of the weak
order on $W$ such that the fibers of the quotient are intervals in the
weak order. By Proposition \ref{contraction}, when the rank of $W$ is
at least $3$ and $W$ is not $\TA_1\times \TA_1 \times \TA_1$, the weak
order poset on $W$ has a wild representation type. When
$W=\TA_1\times \TA_1 \times \TA_1$, then the weak order poset is
isomorphic to the corresponding cambrian lattice, which is a cube. In
particular, it has a tame representation type. Finally, it is easy to
check that when the rank of $W$ is at most $2$, then the weak order
poset has a finite representation type. Indeed, in these cases, the
posets are obtained by adding a minimal and a maximal element to the
disjoint union of two chains. These posets are known to be of finite
representation type (see for instance \cite{chaptal}).

\section{Antichain posets}

This is another family of posets, expected but not known to be
derived equivalent to cambrian lattices.

Let $\Phi$ be a finite root system. The set $\Phi_+$ of positive roots
in $\Phi$ is endowed with a partial order by the relation
$\alpha \leq \beta$ if and only if $\beta - \alpha$ is a positive
linear combination of simple positive roots. This poset is called the
root poset of $\Phi$. If $n$ is the rank of the root system, this
poset has $n$ minimal elements, the simple positive roots.

Given a finite root system $\Phi$, the poset $\nest(\Phi)$ is defined
as the lattice of order ideals in the root poset of $\Phi$. Recall
that an order ideal in a poset $P$ is a subset $L$ of $P$ such that
$x \in L$ and $y \leq x$ implies $y \in L$. The partial order on the
set of order ideals is given by inclusion.

For $S$ a subset of $\Phi_+$, let us denote by $L(S)$ the order ideal
generated by $S$. Every order ideal can be uniquely written as $L(S)$
where $S$ is an antichain in the root poset.

The poset $\nest(\Phi)$ has a unique minimal element, the empty order
ideal. It also has a unique maximal element, the full root
poset. Moreover the restricted poset on the set of order ideals
contained in $L(\alpha_1,\dots,\alpha_n)$ is isomorphic to the $n$-cube poset.

\subsection{Easy types}

If the rank $n \leq 2$, then every poset $\nest(\Phi)$ has a finite representation type. Indeed, in this case Hasse diagram of the poset is a commutative square with a (possibly empty) chain attached to the maximal element of the square. By Proposition \ref{flip_flop}, it is derived equivalent to a Dynkin diagram of type $\TD$. The result follows from Proposition \ref{domine}. 

If the rank $n \geq 4$, then $\nest(\Phi)$ contains a $4$-cube, hence
one can apply Lemma \ref{rank4} to get that representation type is wild.

There remains only to handle the cases of rank $3$.

\subsection{Types of rank $3$}

For the reducible type $(\TA_1)^3$, the poset $\nest(\Phi)$ is
isomorphic to the cube poset. By Lemma \ref{cube}, it has a tame
representation type.

Assume now that $\Phi$ is not of type $(\TA_1)^3$. Then there exist at
least two simple roots (say $\alpha_1$ and $\alpha_2$) in the root poset
that are covered by a common root $\beta$. Then $\beta$ covers only
$\alpha_1$ and $\alpha_2$. This property also holds for the substitute
of the root poset in type $\TyH_3$ introduced by D. Armstrong in
\cite[\S 5.4.1]{armstrong}.

The Hasse diagram of the poset $\nest(\Phi)$ contains a cube, as its
restriction to the order ideals contained in
$L(\alpha_1,\alpha_2,\alpha_3)$. The order ideal $L(\beta)$ covers
$L(\alpha_1,\alpha_2)$ and no other vertex of the cube.

Consider the induced poset $Q$ on the vertices $L(\alpha_i)$,
$L(\alpha_i, \alpha_j)$ and $L(\beta)$. This is just a quiver, with
no commuting relation. Removing $L(\beta)$ gives an affine quiver of
type $\TA_5^{(1)}$. Therefore $Q$ has wild representation type, and so
does $\nest(\Phi)$.

\begin{center}
  \begin{tikzpicture}[rotate=270,>=latex,line join=bevel,scale=0.7]
    \node (node_13) at (35.0bp,307.0bp) [draw,draw=none] {$13$}; \node
    (node_9) at (66.0bp,157.0bp) [draw,draw=none] {$9$}; \node
    (node_8) at (66.0bp,107.0bp) [draw,color=red] {$8$}; \node
    (node_7) at (36.0bp,107.0bp) [draw,color=red] {$7$}; \node
    (node_6) at (66.0bp,57.0bp) [draw,color=red] {$6$}; \node (node_5)
    at (6.0bp,157.0bp) [draw,color=red] {$5$}; \node (node_4) at
    (36.0bp,157.0bp) [draw,draw=none] {$4$}; \node (node_3) at
    (6.0bp,107.0bp) [draw,color=red] {$3$}; \node (node_2) at
    (6.0bp,57.0bp) [draw,color=red] {$2$}; \node (node_1) at
    (36.0bp,57.0bp) [draw,color=red] {$1$}; \node (node_0) at
    (36.0bp,7.0bp) [draw,draw=none] {$0$}; \node (node_11) at
    (18.0bp,207.0bp) [draw,draw=none] {$11$}; \node (node_10) at
    (53.0bp,207.0bp) [draw,draw=none] {$10$}; \node (node_12) at
    (35.0bp,257.0bp) [draw,draw=none] {$12$}; \draw [black,->,very
    thick] (node_2) ..controls (6.0bp,69.947bp) and (6.0bp,80.897bp)
    .. (node_3); \draw [black,->] (node_0) ..controls
    (28.309bp,20.305bp) and (20.92bp,32.128bp) .. (node_2); \draw
    [black,->] (node_3) ..controls (13.691bp,120.31bp) and
    (21.08bp,132.13bp) .. (node_4); \draw [black,->] (node_10)
    ..controls (48.466bp,220.09bp) and (44.23bp,231.39bp)
    .. (node_12); \draw [black,->] (node_4) ..controls
    (40.282bp,170.09bp) and (44.283bp,181.39bp) .. (node_10); \draw
    [black,->,very thick] (node_6) ..controls (58.309bp,70.305bp) and
    (50.92bp,82.128bp) .. (node_7); \draw [black,->] (node_5)
    ..controls (9.0047bp,170.02bp) and (11.785bp,181.14bp)
    .. (node_11); \draw [black,->,very thick] (node_1) ..controls
    (43.691bp,70.305bp) and (51.08bp,82.128bp) .. (node_8); \draw
    [black,->] (node_12) ..controls (35.0bp,269.95bp) and
    (35.0bp,280.9bp) .. (node_13); \draw [black,->] (node_11)
    ..controls (22.282bp,220.09bp) and (26.283bp,231.39bp)
    .. (node_12); \draw [black,->,very thick] (node_1) ..controls
    (28.309bp,70.305bp) and (20.92bp,82.128bp) .. (node_3); \draw
    [black,->] (node_4) ..controls (31.466bp,170.09bp) and
    (27.23bp,181.39bp) .. (node_11); \draw [black,->] (node_8)
    ..controls (66.0bp,119.95bp) and (66.0bp,130.9bp) .. (node_9);
    \draw [black,->,very thick] (node_2) ..controls
    (13.691bp,70.305bp) and (21.08bp,82.128bp) .. (node_7); \draw
    [black,->] (node_0) ..controls (43.691bp,20.305bp) and
    (51.08bp,32.128bp) .. (node_6); \draw [black,->,very thick]
    (node_3) ..controls (6.0bp,119.95bp) and (6.0bp,130.9bp)
    .. (node_5); \draw [black,->] (node_0) ..controls
    (36.0bp,19.947bp) and (36.0bp,30.897bp) .. (node_1); \draw
    [black,->] (node_8) ..controls (58.309bp,120.31bp) and
    (50.92bp,132.13bp) .. (node_4); \draw [black,->] (node_9)
    ..controls (62.745bp,170.02bp) and (59.733bp,181.14bp)
    .. (node_10); \draw [black,->] (node_7) ..controls
    (36.0bp,119.95bp) and (36.0bp,130.9bp) .. (node_4); \draw
    [black,->,very thick] (node_6) ..controls (66.0bp,69.947bp) and
    (66.0bp,80.897bp) .. (node_8);
  \end{tikzpicture}
%  \caption{Wild subquiver for antichain poset of type $\TA_3$}
\end{center}

\section{Stokes lattices}

Let us turn to another family of posets, containing the cambrian
lattices of type $\TA$. The Stokes lattices were introduced by the
first author as posets in \cite{chapoton} (inspired by previous work of
Baryshnikov in \cite{bary}) and they were proved to be lattices by Garver
and McConville in \cite{garver}.

Since the article \cite{garver}, the Stokes lattices can even be
considered as a special case of a more general class of lattices,
attached to dissections of polygons. Our arguments below work just the
same for this extended class, which has very similar properties.

Just as the cambrian lattices, the Stokes lattices also have
$n$-regular Hasse diagrams, where $n$ is a parameter called the rank
of the Stokes lattice.

\subsection{Easy types}

There are essentially the same easy cases as in the Cambrian setting.

In rank $n \leq 2$, the only possible Stokes posets are cambrian
lattices of type $\TA$, already considered before.

In rank $n \geq 4$, one can also use Lemma \ref{rank4} to get wild
representation type.

So once again, there remains to handle the case of rank $3$.

\subsection{Stokes lattices of rank $3$}

Excluding the intersection with the cambrian cases of type $\TA$
(reducible or not), there remains only one case to consider, which is
a poset with 12 vertices and the following shape.

\begin{center}
\begin{tikzpicture}[rotate=270,>=latex,line join=bevel,scale=0.7]
\node (node_9) at (49.647bp,7.0bp) [draw,draw=none] {$9$};
  \node (node_8) at (83.647bp,107.0bp) [draw,color=red] {$8$};
  \node (node_7) at (20.647bp,157.0bp) [draw,color=red] {$7$};
  \node (node_6) at (84.647bp,207.0bp) [draw,draw=none] {$6$};
  \node (node_5) at (54.647bp,207.0bp) [draw,color=red] {$5$};
  \node (node_4) at (54.647bp,257.0bp) [draw,draw=none] {$4$};
  \node (node_3) at (19.647bp,57.0bp) [draw,color=red] {$3$};
  \node (node_2) at (53.647bp,107.0bp) [draw,color=red] {$2$};
  \node (node_1) at (52.647bp,157.0bp) [draw,color=red] {$1$};
  \node (node_10) at (20.647bp,107.0bp) [draw,color=red] {$10$};
  \node (node_11) at (85.647bp,157.0bp) [draw,color=red] {$11$};
  \node (node_0) at (49.647bp,57.0bp) [draw,color=red] {$0$};
  \draw [black,->,very thick] (node_3) ..controls (19.896bp,69.947bp) and (20.124bp,80.897bp)  .. (node_10);
  \draw [black,->,very thick] (node_0) ..controls (50.643bp,69.947bp) and (51.555bp,80.897bp)  .. (node_2);
  \draw [black,->] (node_5) ..controls (54.647bp,219.95bp) and (54.647bp,230.9bp)  .. (node_4);
  \draw [black,->,very thick] (node_11) ..controls (77.7bp,170.31bp) and (70.064bp,182.13bp)  .. (node_5);
  \draw [black,->,very thick] (node_2) ..controls (53.398bp,119.95bp) and (53.17bp,130.9bp)  .. (node_1);
  \draw [black,->] (node_11) ..controls (85.398bp,169.95bp) and (85.17bp,180.9bp)  .. (node_6);
  \draw [black,->] (node_7) ..controls (25.874bp,173.65bp) and (33.233bp,195.58bp)  .. (39.647bp,214.0bp) .. controls (42.727bp,222.85bp) and (46.246bp,232.71bp)  .. (node_4);
  \draw [black,->,very thick] (node_8) ..controls (75.7bp,120.31bp) and (68.064bp,132.13bp)  .. (node_1);
  \draw [black,->,very thick] (node_8) ..controls (84.145bp,119.95bp) and (84.601bp,130.9bp)  .. (node_11);
  \draw [black,->] (node_6) ..controls (76.956bp,220.31bp) and (69.567bp,232.13bp)  .. (node_4);
  \draw [black,->,very thick] (node_10) ..controls (20.647bp,119.95bp) and (20.647bp,130.9bp)  .. (node_7);
  \draw [black,->,very thick] (node_2) ..controls (45.138bp,120.38bp) and (36.888bp,132.38bp)  .. (node_7);
  \draw [black,->] (node_9) ..controls (41.956bp,20.305bp) and (34.567bp,32.128bp)  .. (node_3);
  \draw [black,->,very thick] (node_3) ..controls (12.336bp,71.655bp) and (5.2931bp,86.41bp)  .. (2.6471bp,100.0bp) .. controls (-0.57297bp,116.54bp) and (-1.5011bp,149.61bp)  .. (5.6471bp,164.0bp) .. controls (12.972bp,178.75bp) and (28.034bp,190.4bp)  .. (node_5);
  \draw [black,->] (node_1) ..controls (60.898bp,170.38bp) and (68.899bp,182.38bp)  .. (node_6);
  \draw [black,->] (node_9) ..controls (49.647bp,19.947bp) and (49.647bp,30.897bp)  .. (node_0);
  \draw [black,->,very thick] (node_0) ..controls (42.213bp,70.305bp) and (35.07bp,82.128bp)  .. (node_10);
  \draw [black,->] (node_9) ..controls (54.634bp,21.763bp) and (60.115bp,36.984bp)  .. (64.647bp,50.0bp) .. controls (69.458bp,63.816bp) and (74.8bp,79.603bp)  .. (node_8);
\end{tikzpicture}
\end{center}
By Proposition \ref{wild_subposet}, the subquiver on vertices
$\{0,1,2,3,5,7,8,10,11\}$ and its commutative square $(0,2,10,7)$ is of wild representation type. So, the poset is of wild representation type.

%\bibliographystyle{alpha}
%\bibliography{note_sauvage}

{Fr\'ed\'eric Chapoton} \\
{Institut de Recherche Math\'ematique Avanc\'ee, CNRS UMR 7501, Universit\'e de Strasbourg, F-67084 Strasbourg Cedex, France} \\
{chapoton@unistra.fr}\\
{Baptiste Rognerud} \\
{Institut de Recherche Math\'ematique Avanc\'ee, CNRS UMR 7501, Universit\'e de Strasbourg, F-67084 Strasbourg Cedex, France} \\
{rognerud@unistra.fr}

\end{document}